\documentclass[11pt]{amsart}

\usepackage{amsmath, amsthm, amssymb} 
\numberwithin{equation}{section}

\newtheorem{theorem}{Theorem}[section]
\newtheorem{proposition}[theorem]{Proposition}

\newtheorem{lemma}[theorem]{Lemma}
\newtheorem{corollary}[theorem]{Corollary}

\newcommand{\Ric}{\operatorname{Ric}}
\newcommand{\iddbar}{\sqrt{-1} \partial \overline{\partial}}
\newcommand{\ddbar}{\partial \overline{\partial}}
\newcommand{\RR}{\mathbb{R}}

\newcommand{\mybar}[1]{\overline{#1}}

\newcommand{\jbar}{{\mybar{j}}}

\newcommand{\qbar}{{\mybar{q}}}

\newcommand{\ov}[1]{\overline{#1}}

\newcommand{\ti}[1]{\tilde{#1}}
\newcommand{\tr}[2]{\mathrm{tr}_{#1} #2}
\newcommand{\osf}{\omega_{\textrm{flat}}}
\newcommand{\ost}{\tilde{\omega}}
\newcommand{\gt}{\tilde{g}}

\newcommand{\real}{\operatorname{Re}}


\title[The continuity equation for Hermitian metrics]{The continuity equation,  Hermitian metrics and elliptic bundles}
\author[M. Sherman]{Morgan Sherman}
\address{Department of Mathematics, California Polytechnic State University, San Luis Obispo, CA
93407}
\author[B. Weinkove]{Ben Weinkove}
\address{Department of Mathematics, Northwestern University, 2033 Sheridan Road, Evanston, IL 60208}

\thanks{Research supported in part by NSF grants DMS-1406164 and DMS-1709544.}

\begin{document}

\maketitle

\begin{abstract}  We extend the continuity equation of La Nave-Tian to Hermitian metrics and establish its interval of maximal existence.   The equation is closely related to the Chern-Ricci flow, and we illustrate this in the case of elliptic bundles over a curve of genus at least two.\end{abstract}

\section{Introduction}

Let $M$ be a compact complex manifold of complex dimension $n$.  Suppose that $M$ admits a K\"ahler metric $\omega_0$.
In \cite{LT}, La Nave-Tian (see also the work of Rubinstein \cite{R}) consider a family of K\"ahler metrics 
$\omega = \omega(s)$ satisfying the \emph{continuity equation} 
\begin{equation}
\label{eqn: ce}
  \omega = \omega_0 - s \Ric (\omega), \quad \textrm{for } s \ge 0.
\end{equation}
Here  
 $\Ric(\omega) = -\iddbar \log \det g$
is the Ricci curvature $(1,1)$ form of $\omega = \sqrt{-1} g_{i\ov{j}}dz^i \wedge d\ov{z}^j$.  This equation was introduced as an alternative to the K\"ahler-Ricci flow in carrying out the Song-Tian analytic minimal model program \cite{ST, ST2}.
The continuity equation has the feature that the Ricci curvature along the path is automatically bounded from below and this has led to several developments \cite{FGS, LTZ, Li, ZZ, ZZ2}.

In this paper we  study a natural analogue of (\ref{eqn: ce}) for non-K\"ahler Hermitian metrics.  If $\omega$ is any Hermitian metric we still define 
\begin{equation}
\label{eqn: ricci curv}
  \Ric (\omega) = -\iddbar \log \det g ,
\end{equation}
which we refer to as the \emph{Chern-Ricci form} of $\omega$.  Unlike the K\"ahler case, in general this form need not 
relate to the full Riemann curvature tensor in any simple fashion.  We now consider the continuity equation (\ref{eqn: ce}) for general Hermitian metrics with the definition (\ref{eqn: ricci curv}).

Our first result establishes the maximal existence interval for the continuity equation.

\begin{theorem}
\label{thm: main}
	Let $M$ be a compact manifold with a Hermitian metric 
	$\omega_0$.  
	Then there exists a unique family of Hermitian metrics 
	$\omega = \omega(s)$ satisfying 
	\begin{equation}
	  \label{eqn: flow}
	  \omega = \omega_0 - s \Ric(\omega),\quad \omega > 0,
	  \quad s \in [0, T),
	\end{equation}
	where $T$ is defined by 
	\begin{equation}
	\label{eqn: maximal time}
	  T := \sup \{ s >0 \mid \exists \psi \in C^{\infty}(M) \emph{ with } \omega_0 - s \Ric(\omega_0) + \iddbar \psi > 0 \} .
	\end{equation}
\end{theorem}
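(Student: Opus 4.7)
The strategy is to reformulate \eqref{eqn: flow} as a scalar complex Monge-Amp\`ere equation of Hermitian type and then to solve that equation on $[0,T)$ by the method of continuity in $s$. Fix $s>0$ and set $\chi_s := \omega_0 - s\Ric(\omega_0)$. A short computation shows that $\omega$ solves \eqref{eqn: flow} if and only if $\omega = \chi_s + \iddbar\psi$, where $\psi$ is the unique smooth function solving the Hermitian Monge-Amp\`ere equation
\begin{equation*}
  (\chi_s + \iddbar \psi)^n = e^{\psi/s}\,\omega_0^n, \qquad \chi_s + \iddbar \psi > 0.
\end{equation*}
Explicitly, $\psi = s\log(\det g/\det g_0)$; conversely, any solution of this Monge-Amp\`ere equation recovers a Chern-Ricci identity by applying $\iddbar\log$ to both sides. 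The favorable sign in the exponential factor is what will make the maximum principle effective below, without any cohomological input.

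For existence at each $s \in (0,T)$, I would run the continuity method in $s$ itself, starting from the trivial solution $\psi \equiv 0$ at $s=0$. Openness along $[0,T)$ follows from the implicit function theorem in H\"older spaces: the linearization at $\psi_s$ of the logarithmic form of the equation is the elliptic operator $\Delta_{\omega(s)} - 1/s$, of index zero and with trivial kernel by the maximum principle, hence invertible. Closedness reduces to $C^\infty$ a priori estimates uniform on each subinterval $[0,s^*] \subset [0,T)$. For the $C^0$ upper bound, evaluate at a maximum of $\psi$: there $\iddbar\psi \le 0$, so $\omega \le \chi_s \le C\omega_0$ and hence $e^{\psi/s} \le C^n$. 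For the lower bound, fix a smooth $\tilde\psi$ with $\chi_{s^*} + \iddbar\tilde\psi > 0$ (available by the definition of $T$) and apply the maximum principle to $\psi - \tilde\psi$: at its minimum, $\omega \ge \chi_s + \iddbar\tilde\psi \ge c\,\omega_0$, giving $e^{\psi/s} \ge c^n$. The second-order estimate then follows from the Hermitian Monge-Amp\`ere $C^2$ estimate of Cherrier and Tosatti-Weinkove, and higher regularity from Evans-Krylov and Schauder. Uniqueness is a standard maximum principle argument on the difference of two solutions, the exponential factor ruling out additive-constant ambiguity.

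To identify $T$ as the maximal existence time, let $S \subset [0,\infty)$ be the set appearing inside the supremum in \eqref{eqn: maximal time}. Then $S$ is open in $[0,\infty)$ (for any fixed test $\psi_0$, positivity of $\chi_s + \iddbar\psi_0$ is preserved under small perturbations of $s$), contains $0$, and is star-shaped at $0$: if $s_0 \in S$ is witnessed by $\psi_0$, then for $s \in [0,s_0]$ the convex combination $(s/s_0)\psi_0$ satisfies $\chi_s + \iddbar((s/s_0)\psi_0) = (1-s/s_0)\omega_0 + (s/s_0)(\chi_{s_0}+\iddbar\psi_0) > 0$. Hence $S = [0,T)$. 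Conversely, any Hermitian solution of \eqref{eqn: flow} at parameter $s$ produces, via the reduction above, a test function $\psi$ certifying $s \in S$, so the continuity equation admits no smooth solution for $s \ge T$.

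The main technical obstacle will be the Hermitian $C^2$ estimate. In the absence of $d\omega_0 = 0$, differentiating $\trace_\omega \omega_0$ generates torsion terms that are not killed by the linearized Monge-Amp\`ere operator; these must be absorbed by coupling with an exponential barrier of the form $e^{-A\psi}\trace_\omega \omega_0$, in the style of Tosatti-Weinkove's Hermitian Monge-Amp\`ere work. Once this estimate is in place, the rest of the argument is routine.
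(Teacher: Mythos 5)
Your reduction to a scalar Hermitian Monge--Amp\`ere equation of the form $(\chi_s+\iddbar\psi)^n=e^{\psi/s}\omega_0^n$ is the same one the paper uses (up to the choice of reference volume form), and the a priori estimate strategy (maximum principle for $C^0$, a Cherrier/Tosatti--Weinkove-type second-order bound, then Evans--Krylov) is likewise the same. The structural difference is in how existence is produced: the paper fixes $s\in(0,\tilde T]$, absorbs the witness function $\psi_0$ for $\omega_0-\tilde T\Ric(\omega_0)+\iddbar\psi_0>0$ into a volume form $\Omega=\omega_0^n e^{\psi_0/\tilde T}$ so that the reference form $\hat\omega=\omega_0+s\iddbar\log\Omega$ is a genuine positive Hermitian metric, and then simply invokes Cherrier's solvability theorem (whose own continuity method is in an auxiliary parameter $t$ scaling the inhomogeneity, at fixed $s$). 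You instead run a continuity method in $s$ itself starting from $s=0$.

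This creates a genuine gap in your argument at $s=0$. In the variable $\psi$, the equation carries the factor $e^{\psi/s}$ and the linearization $\Delta_{\omega(s)}-1/s$; both degenerate as $s\to0$, so the implicit function theorem does not give openness near the starting point $s=0$ of your continuity path. The fix is exactly the substitution the paper uses: set $\psi=su$, so the equation becomes $\log\bigl((\chi_s+s\iddbar u)^n/\omega_0^n\bigr)=u$, trivially solved at $s=0$ with non-degenerate linearization $s\Delta_{\omega(s)}-1$. Two further points you should make explicit: (i) your reference form $\chi_s=\omega_0-s\Ric(\omega_0)$ need not be positive for $s>0$, so before invoking the Hermitian $C^2$ estimates you should replace $\chi_s$ by $\chi_s+\iddbar\psi_0$ for a witness $\psi_0$ as above (equivalently, make the paper's choice of $\Omega$); and (ii) in your $C^0$ lower bound the barrier should be $(s/s^*)\tilde\psi$ rather than $\tilde\psi$ --- only the scaled function is guaranteed to satisfy $\chi_s+\iddbar\bigl((s/s^*)\tilde\psi\bigr)=(1-s/s^*)\omega_0+(s/s^*)\bigl(\chi_{s^*}+\iddbar\tilde\psi\bigr)>0$ uniformly for $s\in[0,s^*]$. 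With these repairs your argument closes, but once you have made them you will find you have essentially reproved Cherrier's theorem, which is why the paper simply cites it.
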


We make some remarks about this result.

\medskip
\noindent
1)  Theorem \ref{thm: main} extends the result of La Nave-Tian \cite{LT} who showed that when $\omega_0$ is K\"ahler, there exists a solution to (\ref{eqn: ce}) up to $T = \sup \{ s >0 \mid [\omega_0] -s c_1(M)>0 \}$, where we are writing $c_1(M) = [\Ric(\omega_0)] \in H^{1,1}(M;\mathbb{R})$ for the first Chern class of $M$ (modulo  a factor of $2\pi$).  This $T$ coincides with the maximal existence time for the K\"ahler-Ricci flow \cite{C, TZ, Ts1, Ts2}.

\medskip
\noindent
2)   The continuity equation (\ref{eqn: flow}) for Hermitian metrics can be regarded as an elliptic version of the \emph{Chern-Ricci flow} 
$$\frac{\partial}{\partial t} \omega = - \Ric(\omega),$$
first introduced by Gill \cite{G}.  Indeed the value $T$ of Theorem \ref{thm: main} coincides with the maximal existence time for the Chern-Ricci flow \cite{TW2}. In particular, if $n=2$ and $\omega_0$ satisfies the Gauduchon condition $\partial \ov{\partial} \omega=0$ then $T$ can be readily computed for many examples (see \cite{TW2, TW3}).

\medskip
\noindent
3)  The value $T$ is independent of the choice of $\omega_0$ in the following sense:  if we replace $\omega_0$ by $\omega_0 + \iddbar f>0$ for a smooth function $f$ then the value $T$ does not change. 

\medskip
\noindent
4) We reduce the proof of Theorem \ref{thm: main} to  an existence result of Cherrier \cite{Ch} (see Theorem \ref{thmch} below).

\medskip

Our second theorem gives an example of the continuity equation (\ref{eqn: flow}) in the setting of elliptic surfaces.  In particular, it will illustrate the close connection to the Chern-Ricci flow.  

Let 
$\pi: M \rightarrow S$ be an  elliptic bundle 
over a Riemann surface $S$ of genus at least $2$.  In particular,  each point $y$ in $S$ has a neighborhood $U$ so that $\pi^{-1}(U)$ is biholomorphic to $U \times T^2$ for a complex 1-dimensional torus $T^2$.  There exist such bundles which are \emph{non-K\"ahler} elliptic surfaces, meaning that they do not admit \emph{any} K\"ahler metric (see  the exposition in \cite[Section 8]{TW3}).  In fact, by the Kodaira classification, \emph{every} minimal non-K\"ahler surface of Kodaira dimension 1 is such an elliptic surface, or admits a finite cover by one (see \cite[Lemmas 1, 2]{B} or \cite[Theorem 7.4]{Wa}).

Denote by $\omega_S$ the unique K\"ahler-Einstein metric on $S$ satisfying $\Ric(\omega_S)=-\omega_S$.  Then the pull-back $[\pi^*\omega_S]$ lies in  $c_1(M)$ and it follows from Theorem \ref{thm: main} that the continuity equation (\ref{eqn: flow}) with any initial $\omega_0$ has $T=\infty$ (see Lemma \ref{lemmamax} below).

Take $\omega_0$ to be a Gauduchon ($\ddbar \omega_0=0$) metric on $M$.  Note that every Hermitian metric is conformal to a Gauduchon one \cite{Ga}.

There exists a family of Gauduchon metrics 
$\omega'(s)$ satisfying the continuity equation
\[
  \omega'(s) = \omega_0 - s \textrm{Ric}(\omega'(s)),
\]
for $s \in [0,\infty)$.  
It is convenient to make a scaling change  (cf. \cite{ZZ})
and consider $\omega(s) = \omega'(s)/(s+1)$ so that the equation becomes
\begin{equation} 
  \label{eqn: eb ce}
  (1+s) \omega(s) 
  = \omega_0 - s \textrm{Ric}(\omega(s)), \quad s \in [0,\infty).
\end{equation}
We call this the \emph{normalized continuity equation}.  Our result describes the behavior of $\omega(s)$ as $s \rightarrow \infty$.

\pagebreak[3]
\begin{theorem}
  \label{thm: eb main}  Let $\pi: M \rightarrow S$ be an elliptic bundle as above, and let $\omega_0$ be a Gauduchon metric on $M$.   Let $\omega(s)$ solve the normalized continuity equation (\ref{eqn: eb ce}).
	As $s \rightarrow \infty$, 
	\begin{enumerate}
	\item[(i)] $\displaystyle{\omega(s) \rightarrow \pi^*\omega_S}$
	in the $C^0(M, \omega_0)$ topology.  
	\item[(ii)] $(M, \omega(s))$ converges to $(S, \omega_S)$ in the Gromov-Hausdorff topology.
	\item[(iii)] The Chern-Ricci curvature of $\omega(s)$ remains uniformly bounded.
	\end{enumerate}
\end{theorem}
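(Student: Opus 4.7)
\emph{Proof plan.}  The strategy mirrors the analysis of the parabolic Chern-Ricci flow on elliptic bundles developed in \cite{TW2, TW3}, adapted to the elliptic continuity equation.  First I would recast \eqref{eqn: eb ce} as a family of complex Monge-Amp\`ere equations.  Using the fact that $K_M = \pi^* K_S$ for an elliptic bundle, one constructs a smooth positive volume form $\Omega$ on $M$ with $-\iddbar \log \Omega = -\chi$, where $\chi := \pi^*\omega_S$; concretely one can take $\Omega = \omega_M^n$ for a semi-flat Hermitian metric $\omega_M = \chi + \osf$ built from $\chi$ and a fiberwise flat metric $\osf$ on the torus fibers.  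Setting $F(s) := \log(\omega(s)^n/\Omega)$, we then have $\Ric(\omega(s)) = -\chi - \iddbar F(s)$, so \eqref{eqn: eb ce} rewrites as
\[
  (1+s)\,\omega(s) = \omega_0 + s\chi + s\,\iddbar F(s).
\]
Introducing $\hat\omega_s := \frac{1}{1+s}\omega_0 + \frac{s}{1+s}\chi$ and $\varphi_s := \frac{s}{1+s}F(s)$, this takes the Monge-Amp\`ere form
\[
  (\hat\omega_s + \iddbar \varphi_s)^n = e^{(1+s)\varphi_s/s}\,\Omega, \qquad \omega(s) = \hat\omega_s + \iddbar \varphi_s > 0,
\]
with $\hat\omega_s \to \chi$ smoothly as $s\to\infty$.

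The heart of the proof is obtaining $s$-uniform estimates.  I would first establish a uniform $C^0$ bound on a normalized potential $\tilde\varphi_s := \varphi_s - a_s$ (with $\sup_M \tilde\varphi_s = 0$, for instance) by an $L^\infty$ argument in the spirit of Cherrier \cite{Ch}: derive an $L^p$ bound for $e^{-p\tilde\varphi_s}$ by integrating the Monge-Amp\`ere equation against $\Omega$, then run a Moser iteration, handling the Hermitian torsion terms and accounting for the degeneracy $\chi^n = 0$ (this step forces the normalizing constant to behave like $a_s \sim -\log(1+s)$).  Next I would obtain the second-order estimate $\trace_{\omega_M}\omega(s) \le C$ by applying the maximum principle to the standard quantity $\log \trace_{\omega_M}\omega(s) - A\tilde\varphi_s$ for $A$ large, using the bounded geometry of $\omega_M$.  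Combined with the Monge-Amp\`ere equation, this yields uniform two-sided bounds $c\,\hat\omega_s \le \omega(s) \le C\,\omega_M$.

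With these estimates in hand, (i) follows by upgrading the smooth convergence $\hat\omega_s \to \chi$ to $C^0(M,\omega_0)$ convergence $\omega(s)\to\chi$: the bounds control $\iddbar \varphi_s$ on the base directions while forcing $\omega(s)$ to degenerate in the fiber directions.  Statement (ii) then follows from (i) by a standard collapsing argument: the $\omega(s)$-diameter of each torus fiber tends to zero, while horizontal distances converge to the $\omega_S$-distances on $S$.  For (iii), the equation rearranges as
\[
  \Ric(\omega(s)) = \tfrac{1}{s}\omega_0 - \tfrac{1+s}{s}\omega(s),
\]
so the $C^0(M,\omega_0)$-bound on $\omega(s)$ from (i) immediately gives the required uniform bound on the Chern-Ricci curvature.

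The principal obstacle is the pair of uniform estimates in the second step: the Monge-Amp\`ere equation degenerates in the limit (since $\chi^n = 0$, so the reference class is on the boundary of the positive cone) and Hermitian torsion terms enter every integration by parts since $\omega_0$ is only Gauduchon.  This is exactly the point where the techniques of \cite{TW2, TW3}, developed for the parabolic Chern-Ricci flow, must be carefully reworked for the elliptic continuity equation, where one lacks the time derivative that typically drives many parabolic maximum-principle arguments.
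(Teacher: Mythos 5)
Your overall framing — recast the normalized continuity equation as a complex Monge--Amp\`ere equation, get uniform $C^0$ and second-order estimates, deduce collapsing — is the right shape, and your argument for (iii) (rearranging the equation and using the metric bounds) is exactly what the paper does.  But there are three genuine gaps where the plan as stated would not go through.

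First, the $C^0$ estimate.  You propose Moser iteration in the style of Cherrier, and you describe the role of the Gauduchon hypothesis as ``torsion terms enter every integration by parts.''  That misses the actual point.  The Gauduchon condition is used precisely once, to establish the exact identity $\Omega = 2\,\osf \wedge \omega_S$ (equation (\ref{TWY}), from \cite[Lemma 3.2]{TWY}).  With this identity in hand, one computes directly that $(1+s)\,\ost^2/\Omega = 1 + O(1/s)$, and the maximum principle applied to the potential in the Monge--Amp\`ere equation (\ref{mae}) immediately yields the quantitative bound $|\varphi| \le C/(1+s)$ (Lemma \ref{lmm: phi bound}).  A Moser iteration would be a far longer route and, more importantly, would not obviously produce this decay rate — which the later estimates genuinely need.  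Relatedly, the paper's normalization keeps a factor of $(1+s)$ inside the logarithm so that the potential is $O(1/s)$, whereas your $\varphi_s$ diverges like $-\log(1+s)$; this makes the later maximum-principle arguments considerably more awkward.

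Second, the second-order estimate.  You propose the ``standard quantity'' $\log\trace_{\omega_M}\omega(s) - A\tilde\varphi_s$.  This will not close, because the reference metric $\ost(s)$ has curvature and derivative-of-torsion growing like $\sqrt{s}$ (Lemma \ref{lemmaTR}); the error term produced in the Laplacian estimate is $C\sqrt{s}\,\trace_{g}\ti{g}$.  To absorb it you need the Phong--Sturm-type quantity with a $\sqrt{s}$-weighted potential, namely $Q = \log\trace_{\ti g}{g} - A\sqrt{s}\,\varphi + (\tilde C + \sqrt{s}\,\varphi)^{-1}$, which works precisely because $\sqrt{s}\,\varphi$ stays bounded thanks to the decay from the first step, while $\Delta(\sqrt{s}\,\varphi)$ produces an $A\sqrt{s}\,\trace_g\ti g$ that dominates the bad term.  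Without the $\sqrt{s}$ weights, the constant $A$ would have to be taken $s$-dependent and the argument fails.

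Third, statement (i) does not follow from two-sided bounds alone.  The bound $c\,\hat\omega_s \le \omega(s) \le C\,\omega_M$ gives equivalence of metrics but not convergence $\omega(s)\to\pi^*\omega_S$ in $C^0(M,\omega_0)$.  What is needed is a refined maximum-principle argument showing $\trace_{\ti g}{g} - 2 \le C s^{-\alpha}$ and the analogous bound with $g, \ti g$ swapped (Lemma \ref{lemmaconv}), using a competitor function of the form $s^\alpha(\trace_{\ti g} g - 2) - s^\beta\varphi$; this is where the quantitative convergence rate in remark 2 after the theorem comes from.  Your plan hand-waves this step.

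So: the architecture is right and (iii) is essentially the paper's argument, but the zero-order estimate takes a detour that loses the crucial decay rate, the second-order estimate uses a test function that cannot absorb the $\sqrt{s}$ growth of the reference geometry, and the passage from equivalence to convergence is not supplied.
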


The behavior of $\omega(s)$ mirrors the behavior of the Chern-Ricci flow on such elliptic surfaces, which was studied by Tosatti, Yang and the second-named author \cite{TWY}.  Indeed (i) and (ii) hold for both equations, and the proof of Theorem \ref{thm: eb main} makes heavy use of the results and techniques of \cite{TWY}.  A crucial difference is that the Chern-Ricci curvature bound was not obtained in \cite{TWY}, 
suggesting a possible advantage of the continuity equation in this setting.   We also find some simplifications compared to \cite{TWY}.  

Here are some further remarks about Theorem \ref{thm: eb main}.

\medskip
\noindent
1)  We note that the Gauduchon assumption is only used to obtain the identity (\ref{TWY}) below (see \cite[Lemma 3.2]{TWY}) which is used for the bound on the potential $\varphi$ (Lemma \ref{lmm: phi bound} below).

\medskip
\noindent
2) For (i) the precise convergence we obtain is $| \omega(s) - \pi^*\omega_S|_{\omega_0} \le Cs^{-\alpha}$ for any $\alpha \in (0,1/8)$, which corresponds to the exponential convergence for the Chern-Ricci flow in \cite{TWY}.  

\medskip
\noindent
3)  The paper \cite{TWY} considers the metrics restricted to the fibers along the Chern-Ricci flow and obtains convergence (after rescaling) to flat metrics, making use of arguments from \cite{FZ, G2, GTZ, ShW, SW, To}.   The analogous result holds for the continuity equation.  Moreover, the argument of \cite[Theorem 8.2]{TW3} or \cite[Corollary 1.2]{TWY} gives an extension of Theorem \ref{thm: eb main} to all minimal non-K\"ahler elliptic surfaces, by taking a finite cover.   We omit the details   for the sake of brevity and to avoid repetition.

\medskip
\noindent
3) It is not even known if the Chern \emph{scalar} curvature $R$ is uniformly bounded for the Chern-Ricci flow on elliptic bundles.  The bounds for $R$ proved  
 in \cite{TWY}  were $-C \le R \le Ce^{t/2}$, where $t$ is the time parameter along the flow. 

\medskip
\noindent
4) Zhang-Zhang \cite{ZZ} investigated the K\"ahler version of the continuity equation on minimal elliptic K\"ahler surfaces, including those which have non-bundle fibration structures and singular fibers and established the analogue of (ii) (cf. \cite{ST, TZ2}).

\medskip

The outline of the paper is as follows.  In Section \ref{section: pre} we establish notation and state a technical but important lemma for later use.  In Sections \ref{section: pf} and \ref{section: ell} we prove Theorems \ref{thm: main} and \ref{thm: eb main} respectively.

\section{Preliminaries} \label{section: pre}

Given a Hermitian metric $g= (g_{i\ov{j}})$ with associated $(1,1)$ form $\omega = \sqrt{-1} g_{i\ov{j}} dz^i \wedge d\ov{z}^j$ we write $\nabla$ for its Chern connection.  The Christoffel symbols of $\nabla$ are given by $\Gamma^k_{ij} = g^{\ov{q}k} \partial_i g_{j\ov{q}}$, its torsion is given by $T^k_{ij} = \Gamma^k_{ij} -\Gamma^{k}_{ji}$ and the Chern curvature is $R_{k\ov{\ell}i}^{\ \ \ \, p} = - \partial_{\ov{\ell}}\Gamma^p_{ki}$.  We will sometimes raise and lower indices in the usual way using the metric $g$.

The Chern-Ricci curvature of $g$ is the tensor $R_{k\ov{\ell}} = g^{\ov{j}i} R_{k\ov{\ell}i\ov{j}} = - \partial_k \partial_{\ov{\ell}} \log \det g$, and the associated Chern-Ricci form is
$$\Ric(\omega) = \sqrt{-1} R_{k\ov{\ell}} dz^k \wedge d\ov{z}^{\ell},$$
a closed real $(1,1)$ form.  The Chern scalar curvature is the trace $R = g^{\ov{\ell} k} R_{k\ov{\ell}}$.

We use $\Delta$ to denote the complex Laplacian of $g$ which acts on a function $f$ by the formula $\Delta f = g^{\ov{j}i} \partial_i \partial_{\ov{j}} f$.  Given another Hermitian metric $g'$ with associated $(1,1)$ form $\omega'$, we write $\tr{g}{g'} = \tr{\omega}{\omega'} = g^{\ov{j}i} g'_{i\ov{j}}$.

We note here a technical result which  will be useful for later sections.

\begin{proposition} \label{propform}
Let $g=(g_{i\ov{j}})$ and $g'= (g'_{i\ov{j}})$  be Hermitian metrics with $g'_{i\ov{j}} = g_{i\ov{j}} + \partial_i \partial_{\ov{j}} \varphi$, for a smooth function $\varphi$, and define
$$f = \log \frac{\det g'}{\det g}.$$
Then
\[
\begin{split}
\Delta' \log \tr{g}{g'} = {} & \frac{1}{\tr{g}{g'}} \left\{ \frac{2}{\tr{g}{g'}} \emph{Re} \left( g'^{\ov{q}k} T^i_{ik} \nabla_{\ov{q}} \tr{g}{g'} \right) +K + \Delta f - R  \right. \\ 
{} & + g'^{\ov{j}i} \nabla_i \ov{T^{\ell}_{j\ell}} + g'^{\ov{j}i} g^{\ov{\ell}k} g_{p\ov{j}} \nabla_{\ov{\ell}} T^p_{ik} - g'^{\ov{j}i} g^{\ov{\ell}k} g'_{k\ov{q}} (\nabla_i \ov{T^q_{j\ell}} - R_{i\ov{\ell} p \ov{j}} g^{\ov{q}p} ) \\ {} & - \left. g'^{\ov{j}i} g^{\ov{\ell}k} T^p_{ik} \ov{T^q_{j\ell}} g_{p\ov{q}} \right\},
\end{split}
\]
for $K= g^{\ov{\ell}i} g'^{\ov{j}p} g'^{\ov{q}k} B_{i\ov{j}k} \ov{B_{\ell \ov{p}q}} \ge 0$
where
$$B_{i\ov{j}k} = \nabla_i g'_{k\ov{j}} - g'_{i\ov{j}} \frac{\nabla_k \tr{g}{g'}}{\tr{g}{g'}} + T^p_{ik} g'_{p\ov{j}},$$
and $\Delta'$ is the complex Laplacian of $g'$.
\end{proposition}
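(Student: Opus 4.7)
The plan is to start from the scalar identity
\[
\Delta' \log u = \frac{\Delta' u}{u} - \frac{g'^{\bar q p}\,\partial_p u\,\partial_{\bar q} u}{u^2}, \qquad u := \tr{g}{g'},
\]
and rewrite every $\partial$-derivative in terms of covariant derivatives of $g'$ with respect to the Chern connection $\nabla$ of $g$. Because $\nabla g = 0$ and $\nabla g^{-1} = 0$, one immediately obtains $\partial_k u = g^{\bar j i}\nabla_k g'_{i\bar j}$, so the gradient term becomes $u^{-2} g'^{\bar q p} g^{\bar j i} g^{\bar m \ell}\nabla_p g'_{i\bar j}\,\nabla_{\bar q} g'_{\ell\bar m}$, while $\Delta' u = g'^{\bar q p} g^{\bar j i}\nabla_p\nabla_{\bar q} g'_{i\bar j}$.

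Next I would massage $\Delta' u$ through two standard swaps. First, commute $\nabla_p\nabla_{\bar q} = \nabla_{\bar q}\nabla_p + [\nabla_p,\nabla_{\bar q}]$; the commutator acting on $g'_{i\bar j}$ produces Chern curvature terms $R_{p\bar q}\,\! _i{}^r g'_{r\bar j}$ and $R_{p\bar q\bar j}{}^{\bar s} g'_{i\bar s}$, which after contraction give the $g'^{\bar j i} g^{\bar \ell k} g'_{k \bar q} R_{i\bar\ell p\bar j} g^{\bar q p}$ piece. Second, use the torsion identities $\nabla_p g'_{i\bar j} = \nabla_i g'_{p\bar j} - T^r_{ip}g'_{r\bar j}$ and its conjugate to rotate one holomorphic and one antiholomorphic derivative into position, so that the resulting trace collapses via $g'^{\bar q p}\partial_i g'_{p\bar q} = \partial_i \log\det g'$ and $\log\det g' = f + \log\det g$. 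Taking $g^{\bar j i}$ of $\partial_i \partial_{\bar j}(\log\det g')$ yields the $\Delta f$ and $-R$ contributions after accounting for the additional $\Gamma(g)$ corrections, which recombine into the $\nabla_i\overline{T^\ell_{j\ell}}$, $\nabla_{\bar\ell} T^p_{ik}$, and $\nabla_i \overline{T^q_{j\ell}}$ pieces; the leftover algebraic torsion terms produce the $T^p_{ik}\overline{T^q_{j\ell}}$ contribution.

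The non-negative $K$ is extracted by completing the square. Note that
\[
B_{i\bar j k} = \nabla_i g'_{k\bar j} - g'_{i\bar j}\,\frac{\nabla_k u}{u} + T^p_{ik}g'_{p\bar j}
\]
is precisely the expression whose squared norm $K = g^{\bar\ell i} g'^{\bar j p} g'^{\bar q k} B_{i\bar j k}\overline{B_{\ell\bar p q}} \geq 0$ equals, after expansion, the sum of $|\nabla g'|^2_{g,g',g'}$, of $-|\partial u|^2_{g'}/u$, of the pure torsion term $g'^{\bar j i}g^{\bar\ell k}T^p_{ik}\overline{T^q_{j\ell}}g_{p\bar q}$, and of a cross term $\tfrac{2}{u}\real(g'^{\bar q k} T^i_{ik}\nabla_{\bar q} u)$. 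Rearranging so that $|\nabla g'|^2_{g,g',g'}$ is expressed as $K$ minus the other three contributions, substituting this into $\Delta' u/u$, and subtracting the gradient term $|\partial u|^2_{g'}/u^2$ from the logarithmic identity combines with the $-|\partial u|^2_{g'}/u$ inside the brace, yielding the asserted formula with the coefficient $1/u$ pulled out.

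The main obstacle is purely bookkeeping: keeping track of index positions and sign conventions across the two commutations, the torsion swaps, and the reconciliation between $\partial$ and $\nabla$ when commuting traces with Christoffel symbols. The non-negativity of $K$ is immediate once $B_{i\bar j k}$ has been identified, so no inequality enters the argument; the content is that the specific choice of $B_{i\bar j k}$ absorbs exactly the terms that do not otherwise pair with $\Delta f - R$ or with the displayed curvature/torsion expressions.
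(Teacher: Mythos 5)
The paper does not actually prove this proposition; its proof is a one-line citation to Cherrier \cite{Ch} and to \cite[Section 9]{TW2}. Your sketch therefore goes further than the paper itself, and the scaffolding you lay out --- the scalar identity $\Delta'\log u = \Delta'u/u - |\partial u|^2_{g'}/u^2$, rewriting everything via the Chern connection $\nabla$ of $g$ using $\nabla g = 0$, commuting $\nabla_p\nabla_{\bar q}$ to generate curvature, a torsion swap to bring in $\partial \log\det g'$, and the completion of the square with $B_{i\bar j k}$ --- is indeed how Cherrier's identity is established in the references.

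However, the torsion-swap identity you state is wrong, and it is the hinge of the computation. Since $\nabla_p g'_{i\bar j} = \partial_p g'_{i\bar j} - \Gamma^r_{pi}g'_{r\bar j}$, $T^r_{ip}g_{r\bar j}=\partial_i g_{p\bar j}-\partial_p g_{i\bar j}$, and crucially $g'_{i\bar j}-g_{i\bar j}=\partial_i\partial_{\bar j}\varphi$ (so that $\partial_p g'_{i\bar j}-\partial_i g'_{p\bar j}=\partial_p g_{i\bar j}-\partial_i g_{p\bar j}$), one finds
\[
\nabla_p g'_{i\bar j} - \nabla_i g'_{p\bar j} = T^r_{ip}\bigl(g'_{r\bar j} - g_{r\bar j}\bigr),
\]
not $-T^r_{ip}g'_{r\bar j}$ as you wrote. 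The factor $g'-g$ on the right is precisely what generates the characteristic mix of $g$- and $g'$-contractions in the final formula (compare the $g_{p\bar j}$ in $g'^{\bar j i} g^{\bar\ell k} g_{p\bar j}\nabla_{\bar\ell}T^p_{ik}$ against the $g'_{k\bar q}$ in the next term), so carrying the wrong swap forward would misallocate these weights and the terms would not assemble. Relatedly, your claimed expansion of $K$ lists only four contributions, but $B$ has three summands, so $|B|^2$ produces three diagonal squares plus three cross terms; as written the accounting (in particular the sign of the $|\partial u|^2_{g'}/u$ piece, which is a sum of a positive square and a cross term, not a single term) does not balance. The strategy is the right one, but the sketch as stated would not close without correcting these two points.
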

\begin{proof} This identity is due to Cherrier \cite{Ch}. In this precise form it can be found in  \cite[Section 9]{TW2}.
\end{proof}

\section{Proof of Theorem \ref{thm: main}} \label{section: pf}

In order to prove Theorem \ref{thm: main} we reduce 
the equation (\ref{eqn: flow}) 
to a  complex Monge-Amp\`ere equation on $M$.  
Let $\tilde T \in (0,T)$.  By definition of $T$ there is 
a smooth function $\psi$ such that 
$$\omega_0 - \tilde{T} \Ric(\omega_0) + \iddbar \psi>0.$$
Let $\Omega$ be the volume form given by $\Omega = \omega_0^n e^{\psi/\tilde{T}}$, so that
$$\omega_0 + \tilde{T} \iddbar \log \Omega = \omega_0 - \tilde{T} \Ric(\omega_0) + \iddbar \psi>0.$$
By convexity of the space of Hermitian metrics we also have 
$\omega_0 + s \ddbar \log \Omega > 0$ for each $s \in [0, \tilde T]$.  

\begin{proposition}
\label{lmm: ma} Fix $s \in [0, \tilde{T}]$.  Then there exists a metric  
	 $\omega$ satisfying 
	$\omega = \omega_0 - s \Ric(\omega)$ 
	if and only if 
	there exists a smooth function $u: M \to \RR$ satisfying
	\begin{equation} \label{ma}
	\begin{split}
	  &  \log \frac{ (\omega_0 + s \iddbar \log \Omega + s \iddbar u)^n }{\Omega} - u =  0,  \\
	  & \quad  \omega_0 + s \iddbar \log \Omega + s \iddbar u >  0 . 
	\end{split}
	\end{equation}
\end{proposition}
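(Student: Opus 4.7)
The plan is to reduce the continuity equation to a scalar Monge-Amp\`ere equation via a standard change of unknowns. The single identity driving both directions of the equivalence is
$$\iddbar \log \omega^n = -\Ric(\omega),$$
which is immediate from the local definition $\Ric(\omega) = -\iddbar \log \det g$ (the $\omega^n$ carries a constant-coefficient volume factor whose $\iddbar$-logarithm vanishes).

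For the forward direction, given a Hermitian metric $\omega$ satisfying $\omega = \omega_0 - s\Ric(\omega)$, I would set $u := \log(\omega^n/\Omega)$, a globally defined smooth function on $M$. Applying $s\iddbar$ and using both the identity above and the continuity equation yields
$$s\iddbar u = -s\Ric(\omega) - s\iddbar \log \Omega = (\omega - \omega_0) - s\iddbar \log \Omega.$$
Rearranging gives $\omega = \omega_0 + s\iddbar \log \Omega + s\iddbar u$, which is positive because $\omega$ is; and by construction $\omega^n = e^u \Omega$, so $u$ solves (\ref{ma}).

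For the converse, I would start from a smooth $u$ solving (\ref{ma}) and define $\omega := \omega_0 + s \iddbar \log \Omega + s \iddbar u$, which is a Hermitian metric by the positivity assumption in (\ref{ma}). The scalar equation in (\ref{ma}) gives $\omega^n = e^u \Omega$, hence $u = \log(\omega^n/\Omega)$. Applying $s\iddbar$ once more and invoking the key identity,
$$s\iddbar u = -s\Ric(\omega) - s\iddbar \log \Omega,$$
and substituting this back into the defining formula for $\omega$ recovers $\omega = \omega_0 - s\Ric(\omega)$.

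There is no substantial obstacle here: the proposition is essentially a change of variables between a geometric unknown ($\omega$) and a scalar potential ($u$). The only bookkeeping needed is that the reference $(1,1)$-form $\omega_0 + s\iddbar \log \Omega$ be positive on $[0,\tilde T]$, which has already been arranged through the construction $\Omega = \omega_0^n e^{\psi/\tilde T}$ and the convexity observation recorded just above the statement.
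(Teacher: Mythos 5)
Your proof is correct and follows essentially the same change of variables $u = \log(\omega^n/\Omega)$ as the paper, with the paper's terse "straightforward to check" for the converse merely spelled out. The closing remark about positivity of $\omega_0 + s\iddbar\log\Omega$ is not actually needed for the equivalence itself (only for the existence argument that follows), but this does no harm.
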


\begin{proof}  
Suppose first that the metric $\omega = \omega(s)$ satisfies 
$\omega = \omega_0 - s \Ric(\omega)$.  Define $u$ by $u = \log (\omega^n/\Omega)$.  Then $\Ric(\omega) = - \iddbar \log \Omega - \iddbar u$ and so
$$\omega = \omega_0 + s \iddbar \log \Omega + s \iddbar u>0,$$
as required.

Conversely, if $u$ satisfies  (\ref{ma})  
then 
it is straightforward to check that 
$\omega := \omega_0 + s \iddbar \log \Omega + s \iddbar u$ 
satisfies $\omega = \omega_0 - s \Ric(\omega)$.  
\end{proof}

An immediate consequence of the above proposition is the uniqueness of solutions to the continuity equation.

\begin{corollary}
If $\omega'$ and $\omega$ are two metrics solving the continuity equation (\ref{eqn: flow}) for the same $s$ in $[0,T)$ then $\omega'=\omega$.
\end{corollary}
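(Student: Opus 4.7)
The plan is to reduce the uniqueness question for the continuity equation to a uniqueness question for the Monge--Amp\`ere equation (\ref{ma}), and then exploit the favorable sign of the $-u$ term via the maximum principle.

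For the trivial case $s=0$, the equation (\ref{eqn: flow}) immediately gives $\omega=\omega'=\omega_0$. So fix $s \in (0,T)$ and choose $\tilde T \in (s,T)$; this determines a volume form $\Omega$ as in the discussion before Proposition \ref{lmm: ma}. By Proposition \ref{lmm: ma}, applied to each of $\omega$ and $\omega'$, there exist smooth functions $u, u'$ on $M$ with $\omega = \omega_0 + s\iddbar \log \Omega + s \iddbar u$, $\omega' = \omega_0 + s\iddbar \log \Omega + s\iddbar u'$, and both satisfying
\[
\log \frac{(\omega_0 + s \iddbar \log \Omega + s\iddbar u)^n}{\Omega} = u, \qquad \log \frac{(\omega_0 + s \iddbar \log \Omega + s\iddbar u')^n}{\Omega} = u'.
\]
It then suffices to show $u = u'$.

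For this, let $x_0 \in M$ be a point where $u - u'$ attains its maximum. At $x_0$ we have $\iddbar(u-u')(x_0) \le 0$ as a $(1,1)$-form, so (using $s>0$)
\[
\omega_0 + s \iddbar \log \Omega + s \iddbar u \; \le \; \omega_0 + s \iddbar \log \Omega + s \iddbar u'
\]
as Hermitian $(1,1)$-forms at $x_0$. Since both sides are positive definite, taking the $n$-th exterior power and then logarithms preserves the inequality, and invoking the two Monge--Amp\`ere equations above yields $u(x_0) \le u'(x_0)$. Therefore $u - u' \le 0$ on all of $M$. Interchanging the roles of $u$ and $u'$ gives the reverse inequality, so $u \equiv u'$ and hence $\omega = \omega'$.

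There is no real obstacle here: the corollary is essentially a formal consequence of Proposition \ref{lmm: ma}, since the Monge--Amp\`ere equation (\ref{ma}) has the structure $F(\iddbar u) = u$ with $F$ monotone in $\iddbar u$, which is exactly the situation in which the maximum principle delivers uniqueness without requiring any estimates. The only point to be careful about is that the same volume form $\Omega$ is used for both $u$ and $u'$, which is ensured by fixing $\tilde T > s$ before invoking Proposition \ref{lmm: ma}.
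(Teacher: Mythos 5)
Your proof is correct and takes essentially the same route as the paper: reduce to uniqueness for the Monge--Amp\`ere equation (\ref{ma}) via Proposition \ref{lmm: ma}, then apply the maximum principle using the favorable sign of the zeroth-order term. You have simply spelled out the maximum-principle step that the paper leaves as a one-line remark.
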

\begin{proof} For $s=0$ there is nothing to prove.  For $s \in (0,T)$, the result follows from uniqueness of solutions $u$ of the equation (\ref{ma}), a  consequence of the maximum principle.
\end{proof}

We now proceed to the proof of Theorem \ref{thm: main}.  First note that (\ref{ma}) is trivially solved when 
$s=0$ by taking $u =  \log \omega_0^n/\Omega$.   Fix $s \in (0,\tilde{T}]$.   Define a new function $\varphi = su$, a Hermitian metric $\hat{\omega}$ by 
$$\hat{\omega} = \omega_0 +  s \iddbar \log \Omega,$$
and a function $F= \log (\Omega/\hat{\omega}^n)$.  Then the equation (\ref{ma}) becomes
\begin{equation*} \label{ma2}
\log \frac{(\hat{\omega} + \iddbar \varphi)^n}{\hat{\omega}^n} = \frac{1}{s} \varphi +F, \quad \hat{\omega} + \iddbar \varphi>0.
\end{equation*}
Recall that $s$ here is fixed.  Then Theorem \ref{thm: main} follows from the following result.

\begin{theorem}[Cherrier \cite{Ch}] \label{thmch}
Let $(M, \hat{\omega})$ be a compact Hermitian manifold,  $F$ a smooth function on $M$ and $\lambda>0$ a constant.  Then there exists a unique solution $\varphi$ to the equation
\begin{equation} \label{ma3}
\log \frac{(\hat{\omega} + \iddbar \varphi)^n}{\hat{\omega}^n} = \lambda \varphi +F, \quad \hat{\omega} + \iddbar \varphi>0.
\end{equation}
\end{theorem}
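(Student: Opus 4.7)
The plan is to solve (\ref{ma3}) by the continuity method, considering the family
\begin{equation*}
\log \frac{(\hat\omega + \iddbar \varphi_t)^n}{\hat\omega^n} = \lambda \varphi_t + tF, \quad \hat\omega + \iddbar \varphi_t > 0,
\end{equation*}
for $t \in [0,1]$, and showing that the set of $t$ admitting a smooth solution $\varphi_t$ is non-empty, open, and closed in $[0,1]$. Non-emptiness is immediate: at $t=0$ one takes $\varphi_0 \equiv 0$. Uniqueness follows from the maximum principle: if $\varphi$ and $\varphi'$ both solve (\ref{ma3}), then at a point where $\varphi - \varphi'$ attains its maximum we have $\iddbar(\varphi-\varphi') \le 0$, forcing $(\hat\omega + \iddbar \varphi)^n \le (\hat\omega + \iddbar\varphi')^n$, whence the equation gives $\lambda(\varphi - \varphi') \le 0$ there. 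Arguing symmetrically at a minimum shows $\varphi = \varphi'$.

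For openness, the linearization at a solution $\varphi_t$ (with associated metric $\omega_t = \hat\omega + \iddbar \varphi_t$) is the operator $\Delta_{\omega_t} - \lambda$ acting on Hölder spaces. Because $\lambda > 0$, this operator has trivial kernel on a compact manifold (testing against a maximum and a minimum of an eigenfunction), and by the Fredholm alternative for elliptic operators it is an isomorphism; the implicit function theorem then gives solutions $\varphi_{t'}$ for $t'$ near $t$.

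The real work is closedness, where one needs $t$-independent a priori estimates. The $C^0$ estimate is essentially for free from $\lambda > 0$: at a maximum of $\varphi_t$, one has $\iddbar \varphi_t \le 0$, so $\log(\omega_t^n/\hat\omega^n) \le 0$ and $\lambda \varphi_t \le -tF$, giving $\varphi_t \le \|F\|_{C^0}/\lambda$, with the reverse bound obtained symmetrically at a minimum. The principal obstacle is the second-order (Laplacian) estimate, which in the Hermitian setting is substantially more delicate than the Kähler case because $\hat\omega$ need not be closed. Here one applies the maximum principle to the quantity $Q = \log \tr{\hat\omega}{\omega_t} - A \varphi_t$ for a sufficiently large constant $A$, computing $\Delta_{\omega_t} \log \tr{\hat\omega}{\omega_t}$ via the Cherrier-type identity of Proposition \ref{propform} applied with $g = \hat\omega$, $g' = \omega_t$. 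The positive term $K$ there dominates the gradient terms $\mathrm{Re}(g'^{\ov q k} T^i_{ik} \nabla_{\ov q} \tr{g}{g'})/\tr{g}{g'}$ after Cauchy--Schwarz, while the remaining torsion and curvature contributions are $O(\tr{\omega_t}{\hat\omega})$ and hence are absorbed by $-A\Delta_{\omega_t}\varphi_t = A(\tr{\omega_t}{\hat\omega}) - An$ for $A$ large. This yields $\tr{\hat\omega}{\omega_t} \le C$, so by the $C^0$ estimate on $\varphi_t$ and the equation itself one obtains $C^{-1} \hat\omega \le \omega_t \le C\hat\omega$, i.e.\ uniform ellipticity.

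With uniform ellipticity and a $C^0$ bound in hand, the Evans--Krylov theorem gives a uniform $C^{2,\alpha}$ bound on $\varphi_t$, after which standard Schauder theory bootstrapped through the equation produces $C^{k,\alpha}$ bounds to all orders. These bounds close up the continuity method and give the solution at $t = 1$.
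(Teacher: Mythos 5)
Your overall skeleton (continuity method in $t$, $C^0$ bound from $\lambda>0$, $C^2$ bound via a maximum principle test function and Proposition \ref{propform}, then Evans--Krylov and bootstrap) coincides with the paper's. The openness argument also works; you invoke the index-zero Fredholm theory abstractly where the paper computes the $L^2$-adjoint of $\Delta_0 - \lambda$ with respect to the Gauduchon-weighted volume form $e^{(n-1)\sigma}\omega_0^n$ and applies the maximum principle to it -- both routes are fine.

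There is, however, a genuine gap in your second-order estimate. You take $Q = \log \trace_{\hat\omega}\omega_t - A\varphi_t$ and claim that the nonnegative term $K$ in Proposition \ref{propform} dominates the torsion gradient term
\[
\frac{2}{(\trace_{\hat g} g')^2}\,\mathrm{Re}\!\left(g'^{\qbar k} T^i_{ik}\nabla_{\qbar}\trace_{\hat g} g'\right)
\]
after Cauchy--Schwarz. This does not go through. The quantity $K = g^{\ov\ell i}g'^{\ov j p}g'^{\ov q k}B_{i\ov j k}\ov{B_{\ell\ov p q}}$ is a complete square of the tensor $B_{i\ov j k} = \nabla_i g'_{k\ov j} - g'_{i\ov j}\nabla_k\trace_{\hat g} g'/\trace_{\hat g} g' + T^p_{ik}g'_{p\ov j}$. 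The piece $-g'_{i\ov j}\nabla_k\trace_{\hat g} g'/\trace_{\hat g} g'$ would, in isolation, contribute a positive multiple of $|\nabla\trace_{\hat g} g'|^2_{g'}/\trace_{\hat g} g'$, which is exactly what you would want to dominate the bad gradient term. But to peel that piece off the square you must estimate the cross terms with $\nabla_i g'_{k\ov j}$, and $\|\nabla_i g'_{k\ov j}\|^2$ is precisely the third-order quantity you have no control over. (In the K\"ahler case $T\equiv 0$ so the offending gradient term is simply absent, which is why the classical $Q = \log\trace - A\varphi$ argument closes there.) The paper resolves this by adding the Phong--Sturm term to the test quantity,
\[
Q = \log\trace_{\hat g} g' - A\varphi + \frac{1}{\varphi - \inf_M\varphi + 1},
\]
whose Laplacian produces the genuinely positive term $2|\partial\varphi|^2_{g'}/(\varphi-\inf_M\varphi+1)^3$. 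At a maximum of $Q$, where $\partial Q = 0$, the torsion gradient term can be rewritten in terms of $\partial\varphi$ and then absorbed into this positive contribution. Without that extra term (or, alternatively, a prior $C^1$ estimate as in some of the earlier Hermitian literature), the absorption you describe is not justified and the $C^2$ bound does not close.
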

\begin{proof}
The complex Monge-Amp\`ere equation (\ref{ma3}) is a well-known one in the special case when $\hat{\omega}$ is K\"ahler, and was solved by Aubin \cite{A} and Yau \cite{Y}.  In the Hermitian case, its solution is due to Cherrier \cite{Ch} (note that here $\lambda$ is strictly positive:  for $\lambda =0$ see \cite{Y} and \cite{Ch, TW15}).  For the sake of completeness, we include here a brief sketch of the proof.

We introduce a parameter $t \in [0,1]$ and consider the family of equations
\begin{equation} \label{ma4}
\log \frac{(\hat{\omega} + \iddbar \varphi)^n}{\hat{\omega}^n} = \lambda \varphi +tF, \quad \hat{\omega} + \iddbar \varphi>0.
\end{equation}
for $\varphi =\varphi(t)$.  Let $E$ denote the set of those $t \in [0,1]$ for which (\ref{ma4}) has a solution.  Note that $0 \in E$ since $\varphi=0$ is trivially a solution.  It suffices to show that $E$ is both open and closed.

For the openness of $E$, fix $\alpha \in (0,1)$ and consider the map
$$\Psi : [0,1] \times C^{2,\alpha}(M) \rightarrow C^{\alpha}(M), \quad \Psi(t, \varphi) = \log \frac{(\hat{\omega} + \iddbar \varphi)^n}{\hat{\omega}^n} - \lambda \varphi -tF.$$
Assume $t_0 \in E$, and that (\ref{ma4}) has a corresponding solution $\varphi_0$.  Write $\omega_0 = \hat{\omega} + \iddbar \varphi_0$ and $g_0$ for the corresponding Hermitian metric.  The derivative of  $\Psi$ in the second variable at $(t_0, \varphi_0)$ is the linear operator
$L: C^{2,\alpha}(M) \rightarrow C^{\alpha}(M)$ given by
$$L f = \Delta_0 f - \lambda f,$$
for $\Delta_0$ the Laplacian of $\omega_0$. 
The maximum principle implies that $L$ is injective.   By the Implicit Function Theorem, the surjectivity of $L$ is sufficient to show the openness of $E$.
Following an argument similar to that of \cite{TW1}, we compute the $L^2$ adjoint of this operator with respect to a specific volume form on $M$, making use of a theorem of Gauduchon \cite{Ga}.  Let $\sigma$ be a smooth function such that $\omega_G := e^{\sigma} \omega_0$ is a Gauduchon metric, namely that $\ddbar \omega_G^{n-1}=0$.  Then compute for a smooth function $h$,
$$\int_M (\Delta_0 f) h e^{(n-1)\sigma} \omega_0^n = \int_M f \left( \Delta_0 h + 2n \textrm{Re} \left( \frac{\sqrt{-1} \partial h \wedge \ov{\partial} \omega_G^{n-1}}{e^{(n-1)\sigma} \omega_0^n}  \right) \right) e^{(n-1)\sigma}\omega_0^n.$$
Hence the adjoint of $L$ with respect to $e^{(n-1)\sigma}\omega_0^n$ is given by
$$L^* h = \Delta_0 h + 2n \textrm{Re} \left( \frac{\sqrt{-1} \partial h \wedge \ov{\partial} \omega_G^{n-1}}{e^{(n-1)\sigma} \omega_0^n} \right) - \lambda h,$$
and the maximum principle implies that $L^*$ is injective.  By the Fredholm alternative, $L$ is surjective.

For the closedness of $E$ we need \emph{a priori} estimates on $\varphi$ solving (\ref{ma4}), independent of $t$.  A uniform bound $|\varphi| \le C$ follows immediately from the maximum principle.  Here and henceforth, $C$ will denote a uniform constant that may change from line to line.

Write $\omega' = \hat{\omega}+ \iddbar \varphi$, and let $g'$ be the associated Hermitian metric.  We will bound $\tr{\hat{g}}{g'}$ from above.  By the bound on $|\varphi|$, the equation (\ref{ma4}) and the arithmetic-geometric means inequality this will imply the uniform equivalence of the metrics $\hat{g}$ and $g'$.  We follow the argument of \cite[Section 9]{TW2} which uses a trick of Phong-Sturm \cite{PS} and consider the quantity
$$Q = \log \tr{\hat{g}}{g'} - A \varphi + \frac{1}{\varphi - \inf_M \varphi +1},$$
for $A$ a constant to be determined.
As in (9.4) of \cite{TW2},
\begin{equation} \label{DQ}
\Delta' Q \ge \Delta' \log \tr{\hat{g}}{g'} + A \tr{g'}{\hat{g}} + \frac{2|\partial \varphi|^2_{g'}}{(\varphi - \inf_M \varphi+1)^3} - An -n.
\end{equation}
Next we apply Proposition \ref{propform} with $f = \lambda \varphi + tF$ to obtain
\begin{equation} \label{Dp}
\Delta' \log \tr{\hat{g}}{g'} \ge \frac{2}{(\tr{\hat{g}}{g'})^2} \textrm{Re} (g'^{\ov{q}k} \hat{T}^i_{ik} \hat{\nabla}_{\ov{q}} \tr{\hat{g}}{g'}) -C \tr{g'}{\hat{g}} -C,
\end{equation}
noting that $\hat{\Delta} f \ge -C$ since $\hat{\Delta} \varphi > -n$.

We compute at a point $x_0 \in M$ at which $Q$ achieves its maximum.   At $x_0$ we have  $\partial Q=0$ and, assuming without loss of generality that $\tr{\hat{g}}{g'}$ is large compared to $A$, we obtain
\begin{equation} \label{useful}
\left| \frac{2}{(\tr{\hat{g}}{g'})^2} \textrm{Re} (g'^{\ov{q}k} \hat{T}^i_{ik} \hat{\nabla}_{\ov{q}} \tr{\hat{g}}{g'}) \right| \le \frac{ |\partial \varphi|^2_{g'}}{(\varphi - \inf_M \varphi+1)^3} + C \tr{g'}{\hat{g}},
\end{equation}
recalling that $| \varphi| \le C$.  Combining (\ref{DQ}), (\ref{Dp}) and (\ref{useful}) and choosing $A$ sufficiently uniformly large, we obtain that $\tr{g'}{\hat{g}}\le C$ at $x_0$, and an upper bound for $Q$ follows.  This implies an upper bound for $\tr{\hat{g}}{g'}$ on $M$ and hence the uniform ellipticity of the equation (\ref{ma4}).  Then $C^{2,\alpha}$ estimates for $\varphi$ follow from the Evans-Krylov theory \cite{E,K, Tr} or \cite{TWWY}, and higher order estimates for $\varphi$ follow from a standard bootstrap procedure.
\end{proof}

\section{Elliptic bundles} \label{section: ell}

In this section we give a proof of Theorem \ref{thm: eb main}.
As in the introduction, let
$\pi: M \rightarrow S$ be an elliptic bundle 
over a Riemann surface $S$ of genus at least $2$, 
and let $\omega_0$ be a Gauduchon metric on $M$.

We follow the notation used in \cite{TWY}, 
and use several important facts established there.  
For convenience we restate the relevant facts here and 
refer the reader to the paper for further details.  
Note that when comparing our notation here with that 
in \cite{TWY}, our quantity $s$ relates to the quantity $t$ 
in that paper by the equation $1+s = e^t$.  

Given $y \in S$ we denote by $E_y := \pi^{-1}(y)$ 
the fiber over $y$, which by assumption is 
isomorphic to a torus.  
There is a smooth function $\rho : M \to \RR$ such that 
the form
\[
  \osf := \omega_0 + \iddbar \rho  
\]
has the property that its restriction to each fiber 
$E_y$ is the unique flat metric on $E_y$ in the cohomology class 
of $\omega_0 |_{E_y}$.  
We refer to $\osf$ as the \emph{semi-flat form}.  
It is not necessarily a metric since it may not be 
positive definite on $M$.

We denote by $\omega_S$ the pullback $\pi^* \omega_S$ of the unique 
K\"ahler-Einstein metric on $S$.  
This form lies in $-c_1(M)$;    
fix the volume form $\Omega$ which satisfies 
$\iddbar \log \Omega = \omega_S$ and 
$\int_M \Omega = 2 \int_M \omega_0 \wedge \omega_S$. By \cite[Lemma 3.2]{TWY} we have
\begin{equation} \label{TWY}
\Omega = 2 \osf \wedge \omega_S.
\end{equation}  

From the equation $\iddbar \log \Omega = \omega_S$, every Hermitian metric has Chern-Ricci form equal to $-\omega_S + \iddbar \psi$ for some function $\psi$.  Since $\omega_S \ge 0$,   Theorem \ref{thm: main} immediately implies  the following (cf. \cite[Theorem 1.5]{TW2}).

\begin{lemma} \label{lemmamax}
The maximal existence interval for the continuity equation with any initial metric on $M$ is $[0,\infty)$.
\end{lemma}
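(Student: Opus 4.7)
The plan is to invoke Theorem \ref{thm: main} and show that the supremum $T$ in (\ref{eqn: maximal time}) is $+\infty$. The essential input is the Chern-Ricci identity noted in the paragraph just before the lemma: every Hermitian metric $\omega$ on $M$ satisfies $\Ric(\omega) = -\omega_S + \iddbar \psi$ for some smooth function $\psi$ on $M$, which is an immediate consequence of the global equation $\iddbar \log \Omega = \omega_S$ and the fact that $\omega^n/\Omega$ is a globally defined positive smooth function.

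Applying this to the initial metric $\omega_0$, I would write $\Ric(\omega_0) = -\omega_S + \iddbar \psi_0$ explicitly with $\psi_0 = -\log(\omega_0^n/\Omega) \in C^\infty(M)$. Substituting into the expression inside the supremum defining $T$ yields
$$\omega_0 - s\, \Ric(\omega_0) + \iddbar \psi \;=\; \omega_0 + s\, \omega_S + \iddbar(\psi - s\psi_0)$$
for arbitrary $\psi \in C^\infty(M)$. The natural move is to take $\psi = s\psi_0$, which cancels the $\iddbar$-term and reduces the question to the strict positivity of $\omega_0 + s\, \omega_S$ for every fixed $s > 0$.

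For the positivity step I would observe that $\omega_S$, being the pullback via the smooth submersion $\pi: M \to S$ of the strictly positive Kähler-Einstein form on $S$, is a semi-positive $(1,1)$-form on $M$ (its kernel is precisely the vertical tangent distribution). Since $\omega_0$ is strictly positive as a Hermitian metric, the sum $\omega_0 + s\, \omega_S > 0$ for every $s \ge 0$. Therefore every $s > 0$ lies in the set defining $T$ in (\ref{eqn: maximal time}), so $T = +\infty$, and Theorem \ref{thm: main} produces the desired solution of the continuity equation on the entire interval $[0, \infty)$.

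There is no genuine obstacle: the argument is a direct cohomological/positivity consequence of the setup, with no estimates required. The only point worth naming is the semi-positivity of $\pi^*\omega_S$, which is automatic from $\pi$ being a submersion; everything else is a one-line substitution followed by an invocation of Theorem \ref{thm: main}.
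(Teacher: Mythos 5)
Your proof is correct and follows exactly the same route as the paper: use the identity $\iddbar \log \Omega = \omega_S$ to write $\Ric(\omega_0) = -\omega_S + \iddbar\psi_0$, choose $\psi = s\psi_0$ in the definition of $T$, and conclude from the semi-positivity of $\pi^*\omega_S$ that $\omega_0 + s\omega_S > 0$ for all $s \ge 0$, so $T = \infty$. You have merely spelled out the one-line argument the paper states implicitly just before the lemma.
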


We use the \emph{normalized continuity equation} (\ref{eqn: eb ce}), namely
 $\omega=\omega(s)$ is a family of Gauduchon metrics solving: 
\begin{equation} 
  \label{ce2}
  (1+s) \omega 
  = \omega_0 - s \textrm{Ric}(\omega), \quad s \in [0,\infty).
\end{equation}

We set 
\begin{equation}
  \label{eqn: omega tilde defn}
  \ost = \ost(s) = \frac{1}{1+s} \osf 
  + \frac{s}{1+s} \omega_S.
\end{equation}
Note that $\ost$ may not be positive definite for every $s > 0$, 
but that it will be for every $s$ sufficiently large.  
We will use $\ost(s)$ as a path of reference metrics to 
reduce (\ref{ce2}) to a complex Monge-Amp\`ere equation.    We claim that (\ref{ce2}) is equivalent to
\begin{equation} \label{mae}
\log \frac{(1+s)(\tilde{\omega}+\iddbar \varphi)^2}{\Omega} = \frac{1+s}{s} \varphi + \frac{1}{s} \rho, \quad \omega = \tilde{\omega}+\iddbar \varphi>0
\end{equation}
Indeed, if $\omega$ solves (\ref{ce2}) then define $\varphi$ by
$$(1+s)\varphi = s \log \left(\frac{ (1+s)\omega^2}{\Omega} \right) -\rho,$$
and then applying $\iddbar$ to both sides and rearranging we obtain
$$(1+s) \left( \tilde{\omega} + \iddbar \varphi \right) = \omega_0 - s \Ric(\omega),$$
from which it follows that $\omega = \tilde{\omega}+\iddbar \varphi$.   Likewise,  if $\varphi$ solves (\ref{mae}) then $\omega:= \tilde{\omega}+\iddbar \varphi$ solves (\ref{ce2}).

We now turn to the proof of Theorem \ref{thm: eb main}, by establishing uniform estimates for $\omega=\omega(s)$.  Note that we may assume without loss of generality that $s$ is sufficiently large so that $\ost(s)$ is positive definite.

We begin with:

\begin{lemma}
  \label{lmm: phi bound}
  There is a uniform constant $C$ such that 
  \[
    |\varphi| \le \frac{C}{1+s} . 
  \]
\end{lemma}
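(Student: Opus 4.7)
The plan is to apply the maximum principle directly to the complex Monge--Amp\`ere equation (\ref{mae}), exploiting a near-cancellation in $\ost^2$ that comes from $\omega_S^2 = 0$ (since $\omega_S$ is pulled back from a Riemann surface) together with the identity $\Omega = 2\osf \wedge \omega_S$ recorded in (\ref{TWY}).

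First I would expand $\ost^2$ using the definition (\ref{eqn: omega tilde defn}). Since $\omega_S^2 = 0$,
\begin{equation*}
(1+s)\ost^2 \; = \; \frac{1}{1+s}\osf^2 \, + \, \frac{2s}{1+s}\osf\wedge\omega_S \; = \; \frac{1}{1+s}\osf^2 \, + \, \frac{s}{1+s}\Omega,
\end{equation*}
the last equality by (\ref{TWY}). Since $h := \osf^2/\Omega$ is a fixed smooth (in particular uniformly bounded) function on $M$, this rewrites as
\begin{equation*}
\frac{(1+s)\ost^2}{\Omega} \; = \; 1 + \frac{h-1}{1+s} ,
\end{equation*}
and therefore $\bigl|\log\tfrac{(1+s)\ost^2}{\Omega}\bigr| \le C/(1+s)$ for all $s$ sufficiently large, via $|\log(1+x)| \le 2|x|$ for $|x|$ small.

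Next I would apply the maximum principle to (\ref{mae}). At a point $x_0$ where $\varphi$ attains its maximum, $\iddbar\varphi(x_0) \le 0$, so $\omega(x_0) \le \ost(x_0)$, and hence $\omega^2(x_0) \le \ost^2(x_0)$ as volume forms (on a complex surface, $\alpha \le \beta$ between positive $(1,1)$-forms implies $\alpha^2 \le \beta^2$, as seen by simultaneous diagonalization). Substituting into (\ref{mae}),
\begin{equation*}
\frac{1+s}{s}\varphi(x_0) + \frac{\rho(x_0)}{s} \; \le \; \log\frac{(1+s)\ost^2(x_0)}{\Omega(x_0)} \; \le \; \frac{C}{1+s},
\end{equation*}
and rearranging (using the uniform bound on $\rho$) gives $\varphi(x_0) \le C'/(1+s)$. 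The reverse inequality at a point $x_1$ where $\varphi$ attains its minimum, where now $\omega^2(x_1) \ge \ost^2(x_1)$, yields $\varphi(x_1) \ge -C'/(1+s)$ by the same computation.

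I do not anticipate a serious obstacle: the entire argument is driven by the algebraic identity $(1+s)\ost^2 = \tfrac{1}{1+s}\osf^2 + \tfrac{s}{1+s}\Omega$, after which the sharp rate $1/(1+s)$ appears automatically through $\log(1 + O(1/s)) = O(1/s)$. The scaling built into the normalization (\ref{eqn: eb ce}) appears to be tuned precisely to make $\ost^2$ uniformly close to $\Omega$ in this way.
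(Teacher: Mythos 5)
Your proof is correct and follows essentially the same approach as the paper: the key identity $(1+s)\ost^2/\Omega = 1 + \tfrac{1}{1+s}(\osf^2/\Omega - 1)$, which rests on $\omega_S^2 = 0$ and $\Omega = 2\osf\wedge\omega_S$, is exactly (\ref{ue}) in the paper, and the maximum principle step is identical. Your bound $|\log((1+s)\ost^2/\Omega)| \le C/(1+s)$ is the same as the paper's (\ref{elog}), just written slightly differently.
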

\begin{proof} We follow  \cite[Lemma 6.7]{SW} and \cite[Lemma 3.4]{TWY}. 
Since  
$\Omega = 2 \osf \wedge \omega_S$ 
we have:
\begin{equation} \label{ue}
\begin{split}
   \frac{(1+s)\ost^2}{\Omega} 
  =  {} & \frac{\osf^2 + 2s \osf \wedge \omega_S}{(1+s)\Omega} \\
  =  {} & 1 + \frac1{1+s}\left(\frac{\osf^2}{\Omega}-1\right)  \\ = {} & 1+O(1/s)
\end{split}
\end{equation}
Since $s \log (1+O(1/s))$ is bounded as $s \rightarrow \infty$ we obtain
\begin{equation} \label{elog}
  \left| s \log \frac{(1+s)\ost^2}{\Omega} \right| \le C.
\end{equation}
Now we apply the maximum principle.  
Suppose $\varphi$ achieves its maximum at a point $x_0$.  
Then at $x_0$ we have $\iddbar \varphi \le 0$ and hence $\omega \le \tilde{\omega}$ and $\omega^2 \le \tilde{\omega}^2$.  Then by (\ref{mae}), at $x_0$,
$$\varphi \le - \frac{\rho}{1+s} + \frac{s}{1+s} \log \frac{(1+s)\tilde{\omega}^2}{\Omega} \le \frac{C}{1+s},$$
by (\ref{elog}), giving the upper bound for $\varphi$.  The lower bound is similar.
\end{proof}

Next we show that the volume forms of $\omega$ and $\ost$ are  uniformly equivalent, and in fact approach each other as $s \rightarrow \infty$.

\begin{lemma}
  \label{lemma:  det omega bound}
  There is a uniform constant $C > 0$ such that for $s$ sufficiently large,
  \[
   \ost^2\left( 1- \frac{C}{s} \right) \le \omega^2 \le  \ost^2 \left(1+\frac{C}{s} \right).
  \]
\end{lemma}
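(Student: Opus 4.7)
The plan is to derive the estimate directly from the Monge--Ampère equation (\ref{mae}) by taking a ratio with the identity (\ref{ue}) established in the proof of Lemma \ref{lmm: phi bound}, so there is no need to invoke a maximum principle argument again here.

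More precisely, I would first exponentiate (\ref{mae}) to obtain
\[
(1+s)\omega^2 = \Omega \exp\left( \frac{1+s}{s}\varphi + \frac{1}{s}\rho\right),
\]
and combine this with the identity
\[
\frac{(1+s)\ost^2}{\Omega} = 1 + \frac{1}{1+s}\left(\frac{\osf^2}{\Omega}-1\right),
\]
coming from (\ref{ue}), to write
\[
\frac{\omega^2}{\ost^2} = \frac{\exp\!\left(\tfrac{1+s}{s}\varphi + \tfrac{1}{s}\rho\right)}{1 + \tfrac{1}{1+s}\bigl(\osf^2/\Omega - 1\bigr)}.
\]
Next I would control each piece. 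In the numerator, the exponent is $O(1/s)$: Lemma \ref{lmm: phi bound} gives $|\varphi|\le C/(1+s)$, so $\bigl|\tfrac{1+s}{s}\varphi\bigr|\le C/s$, while $\rho$ is a fixed smooth function on the compact manifold $M$ and hence $|\rho/s|\le C/s$. In the denominator, $\osf^2/\Omega$ is a fixed smooth function on $M$, so the correction is again $O(1/s)$.

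Finally I would expand both exponential and reciprocal to first order: for $s$ large enough, $\exp(O(1/s)) = 1 + O(1/s)$ and $1/(1 + O(1/s)) = 1 + O(1/s)$. Multiplying yields
\[
\frac{\omega^2}{\ost^2} = 1 + O(1/s),
\]
which is the desired two-sided bound. No real obstacle is expected; the only subtlety is making sure the constant $C$ is uniform in $s$, which follows because the $L^\infty$ bounds on $\rho$ and on $\osf^2/\Omega$ are independent of $s$, and the bound on $\varphi$ in Lemma \ref{lmm: phi bound} is uniform.
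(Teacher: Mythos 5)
Your proposal is correct and matches the paper's argument essentially verbatim: both factor $\omega^2/\ost^2$ as $\bigl((1+s)\omega^2/\Omega\bigr)\cdot\bigl(\Omega/((1+s)\ost^2)\bigr)$, control the first factor via (\ref{mae}) and Lemma \ref{lmm: phi bound}, and the second via (\ref{ue}). Nothing further to add.
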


\begin{proof}
From (\ref{ue})
we see that for $s$ sufficiently large
$$\frac{\omega^2}{\tilde{\omega}^2} = \frac{(1+s)\omega^2}{\Omega} \frac{\Omega}{(1+s)\tilde{\omega}^2} = \frac{(1+s)\omega^2}{\Omega} (1+O(1/s)).$$
But from (\ref{mae}) and Lemma \ref{lmm: phi bound}
$$\frac{(1+s)\omega^2}{\Omega} = \exp\left( \frac{1+s}{s} \varphi + \frac{\rho}{s} \right) = 1+O(1/s),$$
and the result follows.
\end{proof}

We now turn to proving that $\tr{\gt}{g}$ is uniformly bounded, where $g, \ti{g}$ are the Hermitian metrics associated to $\omega, \ti{\omega}$.
This, together with the previous lemma will show that $\omega$ and $\ost$ are 
uniformly equivalent.  
We denote by $\tilde{\nabla}$ the Chern connection of $\gt$.  
Similarly we will write $\tilde{T}_{ik}^p$ and $\tilde{R}_{i \jbar k \ov{\ell}}$ 
 for the  torsion  and curvature tensors 
of $\gt$, and $|\tilde{T}|_{\tilde{g}}$, $|\widetilde{\textrm{Rm}}|_{\tilde{g}}$ for their norms with respect to $\tilde{g}$.  We begin with a technical lemma from \cite{TWY}.

\begin{lemma} \label{lemmaTR}
For $s$ sufficiently large,
$$
   | \tilde{T} |_{\gt} \le C, 
  \quad  | \overline{\tilde{\nabla}} \tilde{T} |_{\gt} 
    + | \tilde{\nabla} \tilde{T} |_{\gt} 
    + | \widetilde{\mathrm{Rm}} |_{\gt} \le C \sqrt{s}, 
$$
for a uniform constant $C$.
\end{lemma}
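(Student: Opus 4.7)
The plan is to reduce to the analogous estimate in \cite[Section 3]{TWY}. Under the substitution $1+s = e^t$ the reference form $\ost(s) = \frac{1}{1+s}\osf + \frac{s}{1+s}\omega_S$ agrees with the reference metric used along the Chern-Ricci flow in that paper; since torsion and curvature are intrinsic to the Hermitian metric $\gt$, the bounds there transfer to our setting verbatim.

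The underlying scaling is visible from the local geometry. In a trivializing chart $\pi^{-1}(U) \cong U \times T^2$ with base coordinate $y$ and fiber coordinate $z$, the flat-fiber condition forces the fiber block of $\osf$ to depend only on $y$, while $\omega_S$ has only a base block. Hence $\gt$ has base entries of size $O(1)$ and fiber entries of size $O(1/s)$, so $\gt^{-1}$ has base entries of size $O(1)$ and fiber entries of size $O(s)$.

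Using the formula $\ti{T}^k_{ij} = \gt^{\bar q k}(\partial_i \gt_{j \bar q}-\partial_j \gt_{i \bar q})$, any component of $\ti{T}$ with a fiber upper index pairs an $O(s)$ factor with an $O(1/s)$ derivative of the fiber block, giving $\ti{T} = O(1)$ in components. The squared norm $|\ti{T}|^2_{\gt} = \gt_{p\bar q}\gt^{\bar j i}\gt^{\bar l k}\ti{T}^p_{ik}\overline{\ti{T}^q_{jl}}$ then remains bounded, since the $s$-gains from the two upper contractions are offset by the $1/s$ from $\gt_{p\bar q}$ together with the cancellations built into the expression for $\ti{T}$. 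For $\widetilde{\mathrm{Rm}}$ and $\ti{\nabla}\ti{T}$ the same index bookkeeping applied with one additional derivative leaves at most one uncompensated power of $s$ in the squared norm, yielding the $\sqrt{s}$ bound. The main subtlety is the combinatorial check that no arrangement of base/fiber indices produces a larger factor; this is the content of the explicit calculation carried out in \cite{TWY}.
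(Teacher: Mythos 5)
Your proposal is correct and matches the paper's own approach, which simply cites \cite[Lemma 4.1]{TWY} after noting that the change of variables $1+s=e^t$ identifies $\ost(s)$ with the reference metric used there. Your additional heuristic discussion of the base/fiber block scaling is a reasonable gloss on why the cited estimate holds, and you correctly defer the delicate index bookkeeping to the explicit computation in \cite{TWY}.
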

\begin{proof} See \cite[Lemma 4.1]{TWY}.
\end{proof}

In order to apply the maximum principle to $\tr{\gt}{g}$ we will use the following lemma (cf. \cite[Lemma 5.2]{TWY}).

\begin{lemma}
  \label{lmm: laplacian log trace omega}
	For $s$ sufficiently large,
	\[
	  \Delta \log \tr{\gt}{g} 
	  \ge 
	   \frac{2}{(\tr{\gt}{g})^2} 
	  \real \left( g^{\qbar k} \tilde{T}_{ik}^i 
	  \tilde{\nabla}_{\qbar} \tr{\gt}{g} \right) 
	  - C \sqrt{s}  \, \tr{g}{\gt}.
	\]
\end{lemma}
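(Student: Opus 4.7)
The plan is to apply Proposition~\ref{propform} with the role of the background metric $g$ played by $\gt$ and the role of the perturbed metric $g'$ played by $g = \gt + \iddbar \varphi$, taking $f = \log(\det g / \det \gt)$. This produces an identity of the form
\[
\Delta \log \tr{\gt}{g} = \frac{1}{\tr{\gt}{g}}\left\{\frac{2}{\tr{\gt}{g}}\real\bigl(g^{\qbar k}\tilde{T}^i_{ik}\tilde{\nabla}_{\qbar}\tr{\gt}{g}\bigr) + K + \tilde{\Delta} f - \tilde{R} + \mathcal{E}\right\},
\]
where $K \ge 0$ will be discarded and $\mathcal{E}$ collects the remaining contractions involving $\tilde T$, $\tilde\nabla \tilde T$, $\overline{\tilde\nabla}\tilde T$, and $\widetilde{\mathrm{Rm}}$.

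The first step is to control $\tilde{\Delta} f - \tilde{R}$. Applying $\iddbar$ to (\ref{mae}) gives
\[
-\Ric(g) = \omega_S + \tfrac{1+s}{s}\iddbar \varphi + \tfrac{1}{s}\iddbar \rho,
\]
and since $\iddbar f = -\Ric(g) + \Ric(\tilde \omega)$, taking the $\gt$-trace causes the two occurrences of $\tilde R$ to cancel, yielding
\[
\tilde{\Delta} f - \tilde{R} = \tr{\gt}{\omega_S} + \tfrac{1+s}{s}\bigl(\tr{\gt}{g}-2\bigr) + \tfrac{1}{s}\tr{\gt}{\iddbar \rho}.
\]
The first two pieces are nonnegative up to the harmless constant $-2(1+s)/s$, while the last is bounded by a uniform constant because the explicit formula (\ref{eqn: omega tilde defn}) for $\ost$ forces $\gt^{-1}$ to be of size at most $O(s)$ (only the fiber direction is degenerate). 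Hence $\tilde{\Delta} f - \tilde{R} \ge -C$.

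The second step is to estimate $\mathcal{E}$ from below. Each summand is a contraction of the fixed tensors $\tilde T$, $\tilde\nabla \tilde T$, $\overline{\tilde\nabla}\tilde T$, or $\widetilde{\mathrm{Rm}}$ with $g^{-1}$, $\gt^{-1}$, $g$, and $\gt$. Cauchy--Schwarz manipulations of the type carried out in \cite[Section 9]{TW2} (compare \cite[Lemma 5.2]{TWY}), combined with the bounds of Lemma~\ref{lemmaTR}, will produce
\[
\mathcal{E} \ge -C\sqrt{s}\,\tr{\gt}{g}\,\tr{g}{\gt}.
\]
Combining these bounds, dropping $K$, and dividing by $\tr{\gt}{g}$ then gives the claimed inequality, with the residual $-C$ from the $\tilde\Delta f - \tilde R$ estimate absorbed into $-C\sqrt{s}\,\tr{g}{\gt}$ by using Lemma~\ref{lemma:  det omega bound} and the arithmetic--geometric means inequality, which together yield $\tr{g}{\gt} \ge c > 0$ uniformly.

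The main obstacle is the bookkeeping of the contractions entering $\mathcal{E}$, in particular keeping control of the mixed $g^{-1}$--$\gt^{-1}$ pairings with tensors of $\gt$; what makes the estimate close is the quantitative content of Lemma~\ref{lemmaTR}, which bounds the derivative-of-torsion and curvature norms by exactly $\sqrt{s}$, matching the right-hand side of the statement.
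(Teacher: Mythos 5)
Your proposal reproduces the paper's proof: you apply Proposition~\ref{propform} with $\gt$ in the role of the background metric and $g = \gt + \iddbar\varphi$ as the perturbed metric, compute $\tilde\Delta f - \tilde R$ from the Monge--Amp\`ere equation (\ref{mae}) (the paper obtains the same result by directly computing $\tilde\Delta$ of each summand in $f$ using $\iddbar\rho = (1+s)\ost - s\omega_S - \omega_0$; your route via $\iddbar$ of (\ref{mae}) is algebraically identical), drop $K\ge 0$, bound $\tfrac1s\tr{\gt}{\iddbar\rho}$ by a constant via the $O(s)$ growth of $\gt^{-1}$, and estimate the remaining torsion/curvature contractions using Lemma~\ref{lemmaTR} together with the lower bound $\tr{g}{\gt}\ge c$ coming from Lemma~\ref{lemma:  det omega bound} and AM--GM. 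This is the same decomposition, the same key lemmas, and the same absorption argument as in the paper.
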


\begin{proof}  Write (\ref{mae}) as
$$\log\frac{(\tilde{\omega}+\iddbar \varphi)^2}{\tilde{\omega}^2} = f,$$
for 
$$f = \frac{1+s}{s} \varphi + \frac{1}{s} \rho + \log \frac{\Omega}{(1+s)\tilde{\omega}^2}.$$
Compute
\[
\begin{split}
\tilde{\Delta} f = {} & \frac{1+s}{s} \tilde{\Delta} \varphi + \frac{1}{s} \tilde{\Delta} \rho +  \tr{\tilde{\omega}}\omega_S + \tilde{R} \\
= {} & \frac{1+s}{s} ( \tr{\tilde{\omega}}{\omega} - 2) +  \frac{2(1+s)}{s} -  \frac{1}{s} \tr{\tilde{\omega}}{\omega_0}  + \tilde{R},
\end{split}
\]
where we used $\iddbar \rho =  (1+s) \tilde{\omega} - s \omega_S - \omega_0$.

Applying Proposition \ref{propform} we obtain
\[
\begin{split}
\lefteqn{\Delta \log \tr{\ti{g}}{g}} \\ \ge {} & \frac{1}{\tr{\gt}{g}} \left\{ \frac{2}{\tr{\gt}{g}} \textrm{Re} \left( g^{\ov{q}k} \ti{T}^i_{ik} \ti{\nabla}_{\ov{q}} \tr{\gt}{g} \right)  -C - \frac{1}{s} \tr{\tilde{\omega}}{\omega_0}  
 + g^{\ov{j}i} \ti{\nabla}_i \ov{\ti{T}^{\ell}_{j\ell}} \right. \\ {} & + \left. g^{\ov{j}i} \ti{g}^{\ov{\ell}k} \ti{g}_{p\ov{j}} \ti{\nabla}_{\ov{\ell}} \ti{T}^p_{ik} - g^{\ov{j}i} \ti{g}^{\ov{\ell}k} g_{k\ov{q}} (\ti{\nabla}_i \ov{\ti{T}^q_{j\ell}} - \ti{R}_{i\ov{\ell} p \ov{j}} \ti{g}^{\ov{q}p} ) - g^{\ov{j}i} \ti{g}^{\ov{\ell}k} \ti{T}^p_{ik} \ov{\ti{T}^q_{j\ell}} \ti{g}_{p\ov{q}} \right\},
\end{split}
\]
for a uniform constant $C$.  From the definition of $\ost$ we see that for $s$ sufficiently large, $\omega_0 \le C s \tilde{\omega}$ and hence 
 $\frac{1}{s} \tr{\tilde{\omega}}{\omega_0} \le C$. Note that by Lemma \ref{lemma:  det omega bound}, $\tr{g}{\tilde{g}} \ge c$ for a uniform $c>0$ and $\tr{g}{\tilde{g}}$ is uniformly equivalent to $\tr{\tilde{g}}{g}$.
Applying Lemma \ref{lemmaTR}  completes the proof.
\end{proof}

We can now obtain the bound on 
$\tr{\gt}{g}$.  

\begin{lemma}
  \label{lem: eb tr gtilde g bd}
  We have 
  \[
    \tr{\gt}{g} \le C,
  \]
  and hence $\omega$ and $\ost$ are uniformly equivalent.  
\end{lemma}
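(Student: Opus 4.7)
The plan is to apply a Phong--Sturm style maximum principle argument to the auxiliary quantity
\[
Q = \log \tr{\gt}{g} - A \varphi + \frac{1}{\varphi - \inf_M \varphi + 1},
\]
where $A = B\sqrt{s}$ for a suitable uniform constant $B$. The novelty compared to the Cherrier calculation in Theorem \ref{thmch} is that the coefficient in Lemma \ref{lmm: laplacian log trace omega} grows like $\sqrt{s}$, so I need $A$ large enough to absorb it; the fact that this still gives a uniform bound in the end relies crucially on the sharp estimate $|\varphi|\le C/(1+s)$ from Lemma \ref{lmm: phi bound}, which forces $A|\varphi|\le C$ uniformly in $s$.

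First I would compute $\Delta Q$ using the Laplacian estimate of Lemma \ref{lmm: laplacian log trace omega}, the identity $\Delta \varphi = 2 - \tr{g}{\gt}$ (from $g_{i\jbar} = \tilde g_{i\jbar} + \partial_i\partial_{\jbar}\varphi$), and the standard expansion of $\Delta(1/\eta)$ with $\eta = \varphi - \inf_M\varphi + 1$, which produces the helpful positive term $2|\partial\varphi|_g^2/\eta^3$. At a maximum $x_0$ of $Q$ I would use $\bar\partial Q = 0$ to rewrite $\tilde\nabla_{\qbar}\tr{\gt}{g}/\tr{\gt}{g} = (A + 1/\eta^2)\partial_{\qbar}\varphi$, so that the torsion term in Lemma \ref{lmm: laplacian log trace omega} becomes a contraction of $\tilde T$ with $\partial\varphi$; by Young's inequality and Lemma \ref{lemmaTR} this can be absorbed into $2|\partial\varphi|_g^2/\eta^3$ at the cost of an extra term $C\tr{g}{\gt}$, exactly as in inequality (\ref{useful}) above.

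Collecting terms at $x_0$ gives an inequality of the form
\[
0 \ge (A - C\sqrt{s} - C)\tr{g}{\gt} - CA - C,
\]
and choosing $B$ so that $A = B\sqrt{s} \ge 2C\sqrt{s} + 2C$ yields $\tr{g}{\gt}(x_0) \le C$ with $C$ uniform in $s$. Since Lemma \ref{lemma: det omega bound} bounds $\omega^2/\ost^2$ between positive constants, in complex dimension two we have $\tr{\gt}{g} = \tr{g}{\gt}\cdot \omega^2/\ost^2$, so $\tr{\gt}{g}(x_0) \le C$. Therefore
\[
Q(x_0) \le \log \tr{\gt}{g}(x_0) + A|\varphi(x_0)| + 1 \le C + B\sqrt{s}\cdot \frac{C}{1+s} + 1 \le C
\]
uniformly, and since $Q\le Q(x_0)$ everywhere, one recovers $\log\tr{\gt}{g} \le Q + A\varphi - 1/\eta + \mathrm{const} \le C$ on all of $M$. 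Combined with Lemma \ref{lemma: det omega bound}, this gives the uniform equivalence of $\omega$ and $\ost$.

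The main obstacle is the balancing act with $A$: it must be large enough to dominate the $C\sqrt{s}\tr{g}{\gt}$ term coming from the unbounded curvature and torsion derivatives of $\ost$, yet the resulting $-A\varphi$ perturbation of $\log\tr{\gt}{g}$ must stay $O(1)$. This is precisely why the improved potential estimate of Lemma \ref{lmm: phi bound} (which uses the Gauduchon condition via (\ref{TWY})) is essential here, and it is the step that would not survive without both the normalization in (\ref{eqn: eb ce}) and the special geometric structure of elliptic bundles.
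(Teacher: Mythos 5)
Your choice of barrier $Q = \log \tr{\gt}{g} - A\varphi + 1/(\varphi - \inf_M\varphi + 1)$ with $A = B\sqrt{s}$ breaks at the absorption step. After imposing $\bar\partial Q = 0$ at the max point, the torsion term becomes (up to constants)
\[
\frac{2B\sqrt{s}}{\tr{\gt}{g}} \,\real\!\left( g^{\qbar k}\tilde T^i_{ik}\,\partial_{\qbar}\varphi \right),
\]
and when you apply Young's inequality against the helpful term $2|\partial\varphi|^2_g/\eta^3$ coming from $\Delta(1/\eta)$ (with $\eta = \varphi - \inf_M\varphi + 1$, which is uniformly bounded), the quadratic cost is of order $A^2 = B^2 s$, not $C\tr{g}{\gt}$. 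In inequality (\ref{useful}) of Theorem \ref{thmch} the cost is indeed $O(\tr{g}{\gt})$, but there $A$ is a fixed constant; once $A$ grows like $\sqrt{s}$ that estimate no longer applies. Carrying your computation through, one only gets $0 \ge -CB^2 s + (B - C)\sqrt{s}\,\tr{g}{\gt} - CB\sqrt{s}$, which yields $\tr{g}{\gt} \le C\sqrt{s}$ rather than a uniform bound.

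The paper's fix is to also rescale $\varphi$ by $\sqrt{s}$ inside the reciprocal term, taking $Q = \log\tr{\gt}{g} - A\sqrt{s}\,\varphi + 1/(\tilde C + \sqrt{s}\,\varphi)$ with $A$ a uniform constant and $\tilde C$ chosen so that $\tilde C + \sqrt{s}\,\varphi \ge 1$ (possible by Lemma \ref{lmm: phi bound}). This produces the stronger positive term $2|\partial(\sqrt{s}\,\varphi)|^2_g/(\tilde C + \sqrt{s}\,\varphi)^3 = 2s\,|\partial\varphi|^2_g/(\tilde C + \sqrt{s}\,\varphi)^3$, carrying an extra factor of $s$. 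With this extra $s$ in the denominator of Young's inequality, the cost of absorbing the torsion term drops to $O(A^2) = O(1)$, and the remaining terms give $0 \ge -CA^2 + (A - C)\sqrt{s}\,\tr{g}{\gt} - CA\sqrt{s}$, from which $\tr{g}{\gt} \le C$ follows on choosing $A \ge C + 1$. Your instinct that the sharp potential bound $|\varphi| \le C/(1+s)$ from Lemma \ref{lmm: phi bound} is the key input is correct, but it needs to enter not only in keeping $-A\sqrt{s}\,\varphi$ bounded, but also in making $\tilde C + \sqrt{s}\,\varphi$ a legitimate (bounded, bounded-below) quantity to use in the reciprocal barrier; using the unscaled $\eta$ throws away exactly the factor of $s$ you need.
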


\begin{proof}
As in \cite[Theorem 5.1]{TWY}, define
\[
  Q = \log \tr{\tilde{g}}{g} 
  - A \sqrt{s}\,  \varphi 
  + \frac{1}{\tilde{C}+ \sqrt{s} \, \varphi}
\] 
where $\tilde{C}$ is chosen so that 
$\tilde{C}+ \sqrt{s}\, \varphi \ge 1$ 
(see Lemma \ref{lmm: phi bound}).  
Now $\Delta \varphi = 2 -\tr{g}{\tilde{g}}$ and so
\[
  \begin{split}
  \Delta \left(   - A \sqrt{s}\,  \varphi 
  + \frac{1}{\tilde{C}+ \sqrt{s} \, \varphi} \right) 
  = {} &  \left( -A - \frac{1}{(\tilde{C} + \sqrt{s} \, \varphi)^2} \right) 
  \Delta (\sqrt{s} \, \varphi) 
  + \frac{2 | \partial (\sqrt{s}\, \varphi ) |^2_g}%
         {(\tilde{C} + \sqrt{s} \, \varphi )^3} 
  \\
  {} &  \ge - C A \sqrt{s} + A \sqrt{s} \, \tr{g}{\tilde{g}} 
  + \frac{2 | \partial (\sqrt{s}\, \varphi ) |^2_g}%
         {(\tilde{C} + \sqrt{s} \, \varphi )^3}.
  \end{split}
\]
At the point $x_0$ where $Q$ achieves a maximum we have 
$\partial_{\ov{q}}Q=0$ so that
\begin{align*}
  \frac{2}{(\tr{\gt}{g})^2} \real \left( 
    g^{\qbar k} \tilde{T}_{ik}^i \tilde{\nabla}_{\qbar} \tr{\gt}{g} \right) 
    ={} & \frac{2}{\tr{\gt}{g}} \real \left( 
    g^{\qbar k} \tilde{T}_{ik}^i (A + \frac1{(\tilde{C} + \sqrt{s} \varphi)^2}) 
    \sqrt{s} \partial_{\qbar} \varphi \right) \\
  \ge {} &  - \frac{4A}{\tr{\gt}{g}} 
    \left| g^{\qbar k} \tilde{T}_{ik}^i \right|_g
    \left| \partial_{\qbar} (\sqrt{s} \varphi) \right|_g \\
  \ge {} & - \frac{CA^2}{(\tr{\gt}{g})^2} 
    \left| g^{\qbar k} \tilde{T}_{ik}^i \right|_g^2 
    (\tilde{C} + \sqrt{s} \varphi)^3
    - \frac{\left| \partial (\sqrt{s} \varphi) \right|_g^2}%
           {(\tilde{C} + \sqrt{s} \varphi)^3} \\
  \ge  {} & - \frac{C' A^2}{\tr{\gt}{g}} 
    - \frac{\left| \partial (\sqrt{s} \varphi) \right|_g^2}%
           {(\tilde{C} + \sqrt{s} \varphi)^3},
\end{align*}
where we have used Lemmas \ref{lmm: phi bound} and  \ref{lemmaTR}.
Then, at $x_0$, from Lemma \ref{lmm: laplacian log trace omega},
\begin{align*}
  0 &\ge \Delta Q 
  \ge - C A^2 + (A-C) \sqrt{s} \, \tr{g}{\gt} - CA \sqrt{s}
  \ge - CA^2 + \sqrt{s} \, \tr{g}{\gt} - CA \sqrt{s} 
\end{align*}
if we choose $A \ge C+1$.  
So at this point $\tr{g}{\gt}$, and hence $\tr{\gt}{g}$ is bounded from above and the result follows.
\end{proof}

Next we show that $g$ and $\ti{g}$ approach each other as $s \rightarrow \infty$.

\begin{lemma} \label{lemmaconv}
For every $\alpha$ with $0<\alpha<1/4$ there is a constant $C$ such that for $s$ sufficiently large,
\begin{enumerate}
\item[(a)] $\displaystyle{\tr{\gt}{g}-2\le Cs^{-\alpha}}.$
\item[(b)] $\displaystyle{\tr{g}{\gt} -2 \le C s^{-\alpha}}.$
\item[(c)] $\displaystyle{(1- Cs^{-\alpha/2})\tilde{g} \le g \le (1+Cs^{-\alpha/2}) \ti{g}.}$ 
\end{enumerate}
\end{lemma}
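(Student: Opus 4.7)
The plan is to establish (a) by refining the maximum-principle argument in Lemma~\ref{lem: eb tr gtilde g bd}, and then derive (b) and (c) algebraically from (a) using Lemma~\ref{lemma:  det omega bound} and the eigenvalue identities available in complex dimension two.

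For (a), I would re-run the argument of Lemma~\ref{lem: eb tr gtilde g bd} on
\[
 Q = \log \tr{\gt}{g} - A\sqrt{s}\,\varphi + \frac{1}{\tilde C + \sqrt{s}\,\varphi}
\]
but now treating $A \ge 1$ as a free parameter rather than a fixed constant. At the maximum point $x_0$ the proof of Lemma~\ref{lem: eb tr gtilde g bd} already gives
\[
 0 \ge -CA^2 + (A-C)\sqrt{s}\,\tr{g}{\gt}(x_0) - CA\sqrt{s},
\]
which, for $A \ge 2C$, rearranges to the quantitative bound
\[
 \tr{g}{\gt}(x_0) \le \frac{2A}{A-C} + \frac{CA}{\sqrt{s}} \le 2 + C\left( A^{-1} + As^{-1/2} \right).
\]
In complex dimension two $\tr{\gt}{g} = (\det g/\det \gt)\,\tr{g}{\gt}$, so Lemma~\ref{lemma:  det omega bound} converts this into $\tr{\gt}{g}(x_0) \le 2 + C(A^{-1} + As^{-1/2})$. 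Spreading to arbitrary $y \in M$ via $Q(y) \le Q(x_0)$, using Lemma~\ref{lmm: phi bound} to bound $A\sqrt{s}\,|\varphi(y)-\varphi(x_0)| \le CA/\sqrt{s}$ and the reciprocal-term difference by $O(1/\sqrt{s})$, one obtains
\[
 \tr{\gt}{g}(y) - 2 \le C\left( A^{-1} + As^{-1/2} \right).
\]
Choosing $A = s^{\beta}$ with $\alpha \le \beta \le \tfrac12 - \alpha$ (possible precisely when $\alpha \in (0, 1/4)$) yields (a).

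For (b), let $e_1, e_2 > 0$ denote the eigenvalues of $g$ relative to $\gt$. The identity
\[
 \tr{g}{\gt} = \frac{e_1+e_2}{e_1 e_2} = \frac{\tr{\gt}{g}}{\det g/\det \gt},
\]
combined with (a) and Lemma~\ref{lemma:  det omega bound}, gives (b) with the same exponent $\alpha$. For (c), (a) and (b) provide $e_1 + e_2 \le 2 + Cs^{-\alpha}$ and $1/e_1 + 1/e_2 \le 2 + Cs^{-\alpha}$, while $e_1 e_2 \ge 1 - C/s$ by Lemma~\ref{lemma:  det omega bound}. The identity $(e_1 - e_2)^2 = (e_1 + e_2)^2 - 4 e_1 e_2$ then gives $|e_1 - e_2| \le Cs^{-\alpha/2}$, and combined with the AM--GM bound $e_1 + e_2 \ge 2\sqrt{e_1 e_2} \ge 2 - C/s$ this forces $|e_i - 1| \le Cs^{-\alpha/2}$, which is (c).

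The technical heart of the argument is the quantitative bound $\tr{g}{\gt}(x_0) \le 2 + O(A^{-1} + As^{-1/2})$ in step (a): one must keep careful track of the precise coefficient $(A - C)\sqrt{s}$ in front of $\tr{g}{\gt}$ and the constant term $-2A\sqrt{s}$ in the max-principle inequality, rather than settling for $\tr{g}{\gt}(x_0) \le C$ as in Lemma~\ref{lem: eb tr gtilde g bd}. The bottleneck $\alpha < 1/4$ then arises from optimizing the competing contributions $A^{-1}$ and $As^{-1/2}$, balanced at $A \sim s^{1/4}$.
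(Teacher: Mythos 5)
Your proof is correct, and part (a) takes a genuinely different route from the paper. The paper's argument, once uniform equivalence of $g$ and $\gt$ is in hand, first converts Lemma~\ref{lmm: laplacian log trace omega} into a lower bound $\Delta\tr{\gt}{g} \ge -C\sqrt{s}$ (by absorbing the gradient and torsion terms using $|\nabla\tr{\gt}{g}|^2_g/\tr{\gt}{g}$) and then applies the maximum principle to a fresh barrier $Q = s^\alpha(\tr{\gt}{g}-2) - s^\beta\varphi$ with $\beta = 1/2 + 2\alpha < 1$; dividing by $s^{\beta-\alpha}$ at the max and using (\ref{trdet}) and the decay $s^\beta|\varphi| \le Cs^{\beta-1}$ gives the bound. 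You instead keep the barrier from Lemma~\ref{lem: eb tr gtilde g bd} and optimize over the free parameter $A$. The $\alpha<1/4$ bottleneck appears in both arguments but for superficially different reasons: the paper needs $\beta = 1/2+2\alpha < 1$, while you need $A^{-1}$ and $As^{-1/2}$ to balance at $A \sim s^{1/4}$.

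One bookkeeping point worth flagging: the constant term in the maximum-principle inequality at $x_0$ is not exactly $-2A\sqrt{s}$, nor is the coefficient of $\tr{g}{\gt}$ exactly $(A-C)\sqrt{s}$. Tracking the identity
\[
\Delta\left(-A\sqrt{s}\,\varphi + \frac{1}{\tilde C + \sqrt{s}\,\varphi}\right)
= -2\mu\sqrt{s} + \mu\sqrt{s}\,\tr{g}{\gt} + \frac{2|\partial(\sqrt{s}\varphi)|^2_g}{(\tilde C + \sqrt{s}\,\varphi)^3},
\quad \mu := A + \frac{1}{(\tilde C + \sqrt{s}\,\varphi)^2},
\]
one finds the $\tr{g}{\gt}$-coefficient is $(\mu-C)\sqrt{s}$ and the constant is $-2\mu\sqrt{s}$. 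Crucially the \emph{same} $\mu$ appears in both, so their ratio is $2\mu/(\mu-C) = 2 + O(1/A)$ for $A \ge 2C$, which is exactly what makes the leading $2$ survive; if the two coefficients were not tied together in this way, the argument would only give $\tr{g}{\gt}(x_0) \le C$. With this repaired, the rest of your chain --- passing to $\tr{\gt}{g}(x_0)$ via Lemma~\ref{lemma:  det omega bound}, spreading from $x_0$ through $Q(y)\le Q(x_0)$ and $\sqrt{s}|\varphi| \le C/\sqrt{s}$, then choosing $A=s^\beta$ with $\beta\in[\alpha, 1/2-\alpha]$ --- is sound. Your derivations of (b) from (a) via $\tr{g}{\gt}=(\det\gt/\det g)\tr{\gt}{g}$, and of (c) from (a) and (b) via $(e_1-e_2)^2 = (e_1+e_2)^2 - 4e_1e_2$, are essentially the same elementary algebra the paper invokes by citing \cite[Lemma 7.4]{TWY}.
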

\begin{proof}  We use the idea from \cite[Proposition 7.3]{TWY}, but in our case the argument is slightly easier.
Now that $g$ and $\ti{g}$ are uniformly equivalent, it follows from Lemma \ref{lmm: laplacian log trace omega}
that 
$$\Delta \tr{\gt}{g}  = \tr{\gt}{g} \Delta \log \tr{\gt}{g} + \frac{ |\nabla \tr{\gt}{g}|^2_g}{\tr{\gt}{g}} \ge - C \sqrt{s},$$ where we always assume $s$ is sufficiently large. Define $\beta = 1/2 + 2\alpha <1$ and $Q = s^{\alpha} (\tr{\tilde{g}}{g} -2) - s^{\beta}\varphi$.  Compute
$$\Delta Q \ge - Cs^{\alpha+1/2} + s^{\beta}(\tr{g}{\ti{g}}-2) \ge -C s^{\alpha+1/2} + s^{\beta}(\tr{\ti{g}}{g} -2) - C' s^{\beta-1},$$
where for the last inequality we used
\begin{equation} \label{trdet}
\tr{g}{\ti{g}} = \tr{\ti{g}}{g} + \left( \frac{\det \ti{g}}{\det g}-1 \right) \tr{\ti{g}}{g} = \tr{\ti{g}}{g} + O(1/s),
\end{equation}
which follows from Lemma \ref{lemma:  det omega bound}.

Hence at the point where $Q$ achieves a maximum,
$$s^{\alpha}(\tr{\ti{g}}{g} -2) \le  Cs^{2\alpha +1/2-\beta} +Cs^{\alpha-1} \le 2C.$$
But from Lemma \ref{lmm: phi bound}, $s^{\beta} |\varphi|$ is bounded, and hence $Q$ is bounded, giving (a).  

Part (b) follows from (a) and (\ref{trdet}).  Part (c) is an elementary consequence of parts (a) and (b) (see \cite[Lemma 7.4]{TWY}).
\end{proof}

Now part (i) of Theorem \ref{thm: eb main} follows from part (c) of this lemma and the definition of $\tilde{\omega}$.  Part (ii) is a consequence of (i) (see \cite[Lemma 9.1]{TWY}).  The next result completes the proof of Theorem \ref{thm: eb main}.

\begin{lemma}
  \label{thm: chern scalar curv bd}
  For $\Ric(\omega)$ the Chern-Ricci curvature of $\omega=\omega(s)$, we have 
  \[
    - C\omega \le \Ric(\omega) \le C\omega
  \]
  for a uniform constant $C$. 
\end{lemma}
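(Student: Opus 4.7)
The starting observation is that the normalized continuity equation~(\ref{ce2}) already expresses the Chern-Ricci form algebraically in terms of $\omega_0$ and $\omega$. Solving it for $\Ric(\omega)$ gives
\[
  \Ric(\omega) \;=\; \frac{\omega_0}{s} \;-\; \frac{1+s}{s}\,\omega,
\]
so the whole lemma reduces to bounding the right-hand side above and below by a uniform multiple of $\omega$. The plan is therefore to treat the two inequalities separately, exploiting only positivity of $\omega$, positivity of $\omega_0$, and the reference-metric bounds established earlier in this section.

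For the lower bound, I would use $\omega_0 > 0$ (since $\omega_0$ is a Hermitian metric) to throw away the $\omega_0/s$ term, which yields
\[
  \Ric(\omega) \;\ge\; -\frac{1+s}{s}\,\omega \;\ge\; -2\,\omega
\]
for all $s \ge 1$. No geometric input beyond positivity of $\omega_0$ is required.

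For the upper bound, I would use $\omega > 0$ to discard $-\tfrac{1+s}{s}\omega$, obtaining $\Ric(\omega) \le \omega_0/s$. The task then becomes to show $\omega_0 \le C s\,\omega$ for $s$ large. This proceeds in two steps: first, $\omega_0 \le Cs\,\ost$ for $s$ sufficiently large, which is the inequality already invoked in the proof of Lemma~\ref{lmm: laplacian log trace omega} and follows directly from the definition (\ref{eqn: omega tilde defn}) of $\ost$ (the fiber direction of $\omega_0$ is controlled by $\tfrac{1}{1+s}\osf$ at cost of a factor of $s$, and the base direction by $\tfrac{s}{1+s}\omega_S$); second, the uniform equivalence $\ost \le C\,\omega$ provided by Lemma~\ref{lem: eb tr gtilde g bd} (combined with Lemma~\ref{lemma:  det omega bound} to pass from trace bounds to the actual metric inequality). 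Chaining these gives $\omega_0/s \le C'\omega$ and hence $\Ric(\omega) \le C'\omega$.

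Since both steps in the upper-bound chain were already established earlier, there is essentially no new obstacle: all the genuine work was performed in proving that $\omega$ and $\ost$ are uniformly equivalent. The present lemma is really just a bookkeeping consequence of the equation plus that equivalence, which is why the uniform Chern-Ricci bound available for the continuity equation appears to be significantly easier than for the Chern-Ricci flow, where no analogous algebraic identity expresses $\Ric(\omega)$ directly in terms of bounded quantities.
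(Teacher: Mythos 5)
Your proposal is correct and follows exactly the paper's argument: rearrange the normalized continuity equation to express $\Ric(\omega)$ algebraically, drop the positive term $\omega_0/s$ for the lower bound, and for the upper bound drop $-\tfrac{1+s}{s}\omega$ and chain $\omega_0 \le Cs\,\ost$ with the uniform equivalence of $\ost$ and $\omega$ from Lemmas \ref{lemma:  det omega bound} and \ref{lem: eb tr gtilde g bd}. Nothing essential differs.
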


\begin{proof}
From the continuity equation 
(\ref{ce2})
we have 
\[
  \Ric(\omega) = \frac1s \omega_0 - \frac{(1+s)}{s} \omega.
\]
Hence $\Ric(\omega) \ge - \frac{(1+s)}{s} \omega$ 
giving immediately the lower bound of $\Ric(\omega)$.

For the upper bound we have for $s$ sufficiently large,
\[
  \Ric(\omega)
  \le \frac{1}{s} \omega_0 \le C \omega,
\]
since $\omega_0 \le Cs \tilde{\omega}$  and 
$\tilde{\omega}$ and $\omega$ are equivalent.
\end{proof}


\begin{thebibliography}{0}
\bibitem{A} Aubin, T.  {\em \'Equations du type Monge-Amp\`ere sur les vari\'et\'es k\"ahl\'eriennes compactes},  Bull. Sci. Math. (2) {\bf 102} (1978), no. 1, 63--95.
\bibitem{B} Br\^inz\u{a}nescu, V. {\em N\'eron-Severi group for nonalgebraic elliptic surfaces. II. Non-K\"ahlerian
case}, Manuscripta Math. {\bf 84} (1994), no. 3-4, 415--420.
\bibitem{C} Cao, H.D. {\em Deformation of K\"ahler metrics to K\"ahler-Einstein metrics on compact
K\"ahler manifolds}, Invent. Math. {\bf 81} (1985), no. 2, 359--372.
\bibitem{Ch} Cherrier, P. {\em \'Equations de Monge-Amp\`ere sur les vari\'et\'es Hermitiennes compactes}, Bull. Sc. Math. (2) {\bf 111} (1987), 343--385.
\bibitem{E} Evans, L.C. {\em Classical solutions of fully nonlinear, convex, second order elliptic equations}, Comm. Pure
Appl. Math.  {\bf 25}, 333--363 (1982).
\bibitem{FZ} Fong, F. T.-H., Zhang, Z. {\em
The collapsing rate of the K\"ahler-Ricci flow with regular infinite time singularity}, 
J. Reine Angew. Math. {\bf 703} (2015), 95--113.
\bibitem{FGS} Fu, X., Guo, B., Song, J. {\em Geometric estimates for complex Monge-Amp\`ere equations}, preprint, arXiv: 1706.01527.
\bibitem{Ga}  Gauduchon, P. {\em Le th\'eor\`eme de l'excentricit\'e nulle}, C. R. Acad. Sci. Paris S\'er. A-B {\bf 285} (1977), no. 5, A387--A390.
\bibitem{G} Gill, M. {\em Convergence of the parabolic complex Monge-Amp\`ere equation on compact Hermitian manifolds}, Comm. Anal. Geom. {\bf 19} (2011), no. 2, 277--303.
\bibitem{G2} Gill, M. {\em Collapsing of products along the K\"ahler-Ricci flow}, Trans. Amer. Math. Soc.
{\bf 366} (2014), no. 7, 3907--3924.
\bibitem{GTZ} Gross, M., Tosatti, V., Zhang, Y. {\em Collapsing of abelian fibred Calabi-Yau manifolds},
Duke Math. J. {\bf 162} (2013), no. 3, 517--551.
\bibitem{K}  Krylov, N.V. {\em Boundedly nonhomogeneous elliptic and parabolic equations}, Izvestia Akad. Nauk. SSSR
{\bf 46} (1982), 487--523; English translation in. Math. USSR Izv. {\bf 20} (1983), no. 3, 459--492.
\bibitem{LT} La Nave, G., Tian, G. {\em A continuity method to construct canonical metrics},  Math. Ann. {\bf 365} (2016), no. 3-4, 911--921.
\bibitem{LTZ}  La Nave, G., Tian, G., Zhang, Z. {\em Bounding diameter of singular K\"ahler metric}, Amer. J. Math. {\bf 139} (2017), no. 6, 1693--1731.
\bibitem{Li}  Li, Y. {\em Bounding diameter of conical K\"ahler metric}, J. Geom. Anal. {\bf 28} (2018), no. 2, 950--982. 
\bibitem{PS} Phong, D.H., Sturm, J. {\em The Dirichlet problem for degenerate complex Monge-Amp\`ere
equations}, Comm. Anal. Geom. {\bf 18} (2010), no. 1, 145--170.
\bibitem{R} Rubinstein, Y. A. {\em 
Some discretizations of geometric evolution equations and the Ricci iteration on the space of K\"ahler metrics},
Adv. Math. {\bf 218} (2008), no. 5, 1526--1565. 
\bibitem{ShW}  Sherman, M., Weinkove, B. {\em Local Calabi and curvature estimates for the Chern-Ricci flow}, New York J. Math. 19 (2013), 565--582.
\bibitem{ST} Song, J., Tian, G. {\em The K\"ahler-Ricci flow on surfaces of positive Kodaira dimension}, Invent. Math. {\bf 170} (2007), no. 3, 609--653.
\bibitem{ST2} Song, J., Tian, G. {\em The K\"ahler-Ricci flow through singularities}, Invent. Math. {\bf 207} (2017), no. 2, 519--595.
\bibitem{SW} Song, J., Weinkove, B. {\em An introduction to the K\"ahler-Ricci flow}, in {\em An introduction to the K\"ahler-Ricci flow}, 89--188, Lecture Notes in Math. 2086, Springer, Cham., 2013.
\bibitem{TZ} Tian, G., Zhang, Z. {\em On the K\"ahler-Ricci flow on projective manifolds
of general type}, Chinese Ann. Math. Ser. B {\bf 27} (2006), no. 2, 179--192.
\bibitem{TZ2} Tian, G. Zhang, Z. {\em Relative volume comparison of Ricci flow and its applications}, preprint, arXiv: 1802.09506.
\bibitem{To}  Tosatti, V. {\em Adiabatic limits of Ricci-flat K\"ahler metrics}, J. Differential Geom. 84
(2010), no.2, 427--453.
\bibitem{TWWY}  Tosatti, V., Wang, Y.,  Weinkove, B., Yang, X. {\em $C^{2,\alpha}$ estimates for nonlinear elliptic equations in complex and almost complex geometry}, Calc. Var. Partial Differential Equations {\bf 54} (2015), no. 1, 431--453.
\bibitem{TW1} Tosatti, V., Weinkove, B. {\em Estimates for the complex Monge-Amp\`ere equation on Hermitian and balanced manifolds}, Asian J. Math. {\bf 14} (2010), no. 1, 19--40.
\bibitem{TW15} Tosatti, V., Weinkove, B. {\em The complex Monge-Amp\`ere equation on compact Hermitian manifolds}, J. Amer. Math. Soc. {\bf 23} (2010), no.4, 1187--1195.
\bibitem{TW2} Tosatti, V., Weinkove, B. {\em On the evolution of a Hermitian metric by its Chern-Ricci form}, J. Differential Geom. {\bf 99} (2015), no. 1, 125--163.
\bibitem{TW3} Tosatti, V., Weinkove, B. {\em The Chern-Ricci flow on complex surfaces}, Compos. Math. {\bf 149} (2013), no. 12, 2101--2138.
\bibitem{TWY} Tosatti, V., Weinkove, B., Yang, X. {\em Collapsing of the Chern-Ricci flow on elliptic surfaces}, Math. Ann. {\bf 362} (2015), no. 3-4, 1223--1271.
\bibitem{Tr}  Trudinger, N.S. {\em Fully nonlinear, uniformly elliptic equations under natural structure conditions}, Trans.
Am. Math. Soc. {\bf 278} (1983), no. 2, 751--769.
\bibitem{Ts1} Tsuji, H. {\em Existence and degeneration of K\"ahler-Einstein metrics on minimal algebraic
varieties of general type}, Math. Ann. {\bf 281} (1988), no. 1, 123--133.
\bibitem{Ts2} Tsuji, H. {\em Degenerate Monge-Amp\`ere equation in algebraic geometry}, in {\em Miniconference
on Analysis and Applications} (Brisbane, 1993), 209--224, Proc. Centre Math.
Appl. Austral. Nat. Univ. {\bf 33}, Austral. Nat. Univ., Canberra, 1994.
\bibitem{Wa}  Wall, C.T.C. {\em Geometric structures on compact complex analytic surfaces}, Topology
{\bf 25} (1986), no. 2, 119--153.
\bibitem{Y} Yau, S.-T. {\em On the Ricci curvature of a compact K\"ahler
manifold and the complex Monge-Amp\`ere equation, I}, Comm. Pure
Appl. Math. {\bf 31} (1978), 339--411.
\bibitem{ZZ} Zhang, Y. and Zhang, Z. {\em The continuity method on minimal elliptic K\"ahler surfaces}, preprint, arXiv: 1610.07806.
\bibitem{ZZ2} Zhang, Y. and Zhang, Z. {\em The continuity method on Fano fibrations}, preprint, arXiv: 1612.01348.

\end{thebibliography}
\end{document}